\newtheorem{thm}{Theorem}[section]
\newtheorem{cor}[thm]{Corollary}
\newtheorem{lemm}[thm]{Lemma}
\newtheorem{conjecture}[thm]{Conjecture}
\theoremstyle{definition}
\newtheorem{defi}[thm]{Definition}
\newtheorem{example}[thm]{Example}
\newtheorem{remark}[thm]{Remark}
\renewcommand{\phi}{\varphi}
\renewcommand{\geq}{\geqslant}
\renewcommand{\leq}{\leqslant}
\newcommand{\defemph}[1]{\emph{#1}} % emphasis for definitions
\newcommand{\reals}{\mathbb{R}}
\newcommand{\compl}{\mathbb{C}}
\newcommand{\quats}{\mathbb{H}}
\newcommand{\nats}{\mathbb{N}}
\newcommand{\ints}{\mathbb{Z}}
\newcommand{\rats}{\mathbb{Q}}
\newcommand{\iso}{\cong}    % isomorph
\DeclarePairedDelimiter{\card}{\lvert}{\rvert}  % Cardinality
\DeclarePairedDelimiter{\abs}{\lvert}{\rvert}  % absolute value 
\DeclarePairedDelimiter{\norm}{\lVert}{\rVert} % norm on R^d or C^d
\DeclarePairedDelimiter{\erz}{\langle}{\rangle}
\newcommand*{\allone}{\mathds{1}}
\DeclareMathOperator{\id}{Id}
\DeclareMathOperator{\tr}{tr}
\DeclareMathOperator{\Z}{Z}            % center
\DeclareMathOperator{\C}{\mathbf{C}}   % centralizer
\DeclareMathOperator{\N}{\mathbf{N}}   % normalizer
\DeclareMathOperator{\GL}{GL}
\newcommand{\Sym}[1]{\mathrm{S}_{#1}}  % symmetric group
\DeclareMathOperator{\Aut}{Aut}
\DeclareMathOperator{\Gal}{Gal}
\DeclareMathOperator{\Units}{\mathbf{U}}
\DeclareMathOperator{\enmo}{End}
\DeclareMathOperator{\Fix}{Fix}
\DeclareMathOperator{\Irr}{Irr}
\DeclareMathOperator{\mat}{\mathbf{M}}
\DeclareMathOperator{\conv}{conv}
\DeclareMathOperator{\orbpol}{P}
\newcommand{\TheTitle}{Equivalence of Lattice Orbit Polytopes}
\newcommand{\TheAuthors}{Frieder Ladisch and Achill Sch\"urmann}
\begin{document}

%-------------------------------------------------------------------
% title information as required by amsart class file
%-------------------------------------------------------------------

\title{\TheTitle}
\author{Frieder Ladisch}
\address{Universität Rostock,
         Institut für Mathematik,
         Ulmenstr.~69, Haus~3,
         18057 Rostock,
         Germany}
\email{frieder.ladisch@uni-rostock.de}
% ------------
\author{Achill Sch\"urmann}
\email{achill.schuermann@uni-rostock.de}
\subjclass[2010]{Primary 20C10; 
    Secondary 16U60, 20B25, 20C15, 52B20, 90C10}
\keywords{Orbit polytope, core points, group representation, 
    lattice, integer linear programming}
\thanks{The authors were supported by the DFG (Project: SCHU 1503/6-1)}

\dedicatory{Dedicated to Jörg M.\,Wills
on the occasion of his 80th birthday}
% ------------

%-------------------------------------------------------------------
% end title information amsart
%-------------------------------------------------------------------

% comment out next lines when using amsart:
%\maketitle
%% Dedication:
%\begin{center}
%  \emph{Dedicated to Jörg M.\,Wills
%  on the occasion of his 80th birthday}
%\end{center}

%\date{\today}

\begin{abstract}
  Let $G$ be a finite permutation group acting
  on $\mathbb{R}^d$ by permuting coordinates.
  A \emph{core point} (for $G$) is an 
  integral vector $z\in \mathbb{Z}^d$ such that
  the convex hull of the orbit $Gz$ contains no other
  integral vectors but those in the orbit $Gz$.
  Herr, Rehn and Schürmann considered the question 
  for which groups there are infinitely many core points up
  to \emph{translation equivalence},
  that is, up to translation by vectors fixed by the group.
  In the present paper, we propose a coarser equivalence relation
  for core points
  called \emph{normalizer equivalence}. 
  These equivalence classes often contain infinitely many
  vectors up to translation, for example when
  the group admits an irrational invariant subspace
  or an invariant irreducible subspace occurring with multiplicity
  greater than $1$.
  We also show that the number of core points up to normalizer 
  equivalence is finite if $G$ is a so-called \emph{QI-group}.
  These groups include all transitive permutation groups
  of prime degree.
  We 
  give an example to show how the concept of normalizer equivalence 
  can be used 
  to simplify integer convex optimization problems.  
\end{abstract}

% comment out next line when not using amsart:
\maketitle

%-------------------------------------------------------------------
% Stuff only for SIAM:

%\begin{keywords}
%  Orbit polytope, core points, group representation, 
%  lattice, integer linear programming
%\end{keywords}
%
%\begin{AMS}
%  Primary 20C10; 
%  Secondary 16U60, 20B25, 20C15, 52B20, 90C10
%\end{AMS}

% end SIAM stuff
%-------------------------------------------------------------------

%-------------------------------------------------------------------
% end title/header stuff, begin text body
%-------------------------------------------------------------------

\section{Introduction}
Let $G\leq \GL(d,\ints)$ be a finite group.
We consider orbit polytopes 
$\conv(Gz)$ of integral vectors $z\in \ints^d$, 
that is, the convex hull of an orbit of a point
$z$ with integer coordinates.
We call $z$ a \emph{core point} for $G$ when
the vertices are the only integral vectors in the orbit polytope
$\conv(Gz)$.
Core points were introduced in \cite{BoediHerrJoswig13,herrrehnschue13}
in the context of convex integer optimization,
in order to develop new techniques to exploit symmetries.
Core points are relevant to symmetric convex integer optimization,
since  a $G$-symmetric convex set contains an 
integer vector if and only if it contains a core point for $G$.
So when a $G$-invariant convex integer optimization problem has a 
solution,
then there is a core point attaining the optimal value.
As the set of core points is itself $G$-symmetric,
it even suffices to consider only one core point from each $G$-orbit.
In this way, solving a $G$-invariant convex integer optimization problem
can be reduced to a considerably smaller set of integral vectors.
Furthermore, since core points are known to be close to some
$G$-invariant subspace~\cite[Theorem~3.13]{rehn13diss}
\cite[Theorem~9]{herrrehnschue15},
one can for example use them to 
add additional linear or quadratic constraints to a given 
symmetric problem (see, for instance, \cite[Section~7.3]{rehn13diss}).

Previous work on core points mainly considered groups
for which there are only finitely many core points up to
a natural equivalence relation called
\emph{translation equivalence}
\cite{BoediHerrJoswig13,herrrehnschue13,herrrehnschue15}.
For some of these groups, 
even a naive enumeration approach is sufficient
to beat state-of-the-art commercial solvers. 
Moreover, when there are only finitely many core points up to 
translation equivalence,
then one can parametrize the core points of $G$ in a natural way, 
and thereby obtain a beneficial reformulation of a $G$-invariant
problem~\cite{herrrehnschue13}.
This technique was used to solve a previously open problem from 
the MIPLIB 2010~\cite{KochEtAl11}
(see~\cite{herrrehnschue13}).
More elaborate algorithms taking advantage of core points, 
possibly combined with classical techniques from integer optimization,
have yet to be developed.  

In this paper we extend the list of groups for which  
core points can be parametrized.
This is achieved by introducing a new equivalence relation 
for core points,
which is coarser than the translation equivalence previously used. 
It turns out that this new equivalence relation
not only helps to classify core points, 
but also suggests a new way to transform $G$-invariant
convex integer optimization problems in a natural way into possibly
simpler equivalent ones.
Knowing a group $G$ of symmetries, 
elements of its normalizer in 
$\GL(d,\ints)$ 
can be used to transform a 
$G$-invariant convex integer optimization problem 
linearly into an equivalent $G$-invariant problem. 
As we  show in \cref{sec:app} 
for the case of integer linear problem instances, 
the transformed optimization problems are sometimes  
substantially easier to solve.
To apply this technique in general, one needs to know how to find
a good transformation from the normalizer 
which yields an easy-to-solve transformed problem.
While this is easy in some cases as in our examples,
we do not yet understand satisfactorily how to find
good transformations from the normalizer in general.

In the following, we write
\[
   \Fix(G) = \{ v\in \reals^d 
                \mid 
                gv=v \text{ for all } g\in G
             \}
\]
for the fixed space of $G$ in $\reals^d$.
When $z$ is a core point and $t\in \Fix(G) \cap \ints^d$, 
then $z+t$ is another core point.
We call the core points $z$ and $z+t$ \emph{translation equivalent}.
Herr, Rehn and Schürmann~\cite{herrrehnschue15} consider the question
of whether there are finitely  or infinitely many core points up to 
translation equivalence in the case where 
$G$ is a permutation group acting by permuting coordinates.
Their methods can be used to show that there are only finitely many
core points up to translation 
when $\reals^d / \Fix(G)$ has no $G$-invariant subspaces
other than the trivial ones~\cite[Theorem~3.13]{rehn13diss}.
They conjectured that in all other cases,
there are infinitely many core points up to translation.
This has been proved in special cases but is open in general.

In this paper, we consider a coarser equivalence relation, 
where we allow
to multiply core points with invertible integer matrices
$S\in \GL(d,\ints)$ which normalize the subgroup $G$.
Thus two points $z$ and $w$ are called 
\emph{normalizer equivalent}, when
$w= Sz +t$, where $S$ is an element of the 
\emph{normalizer} of $G$ in $\GL(d,\ints)$
(in other words, $S^{-1} G S = G$),
and $t\in \Fix(G)\cap \ints^d$.
In \cref{t:finitecrit}, we will determine 
when these coarser equivalence classes contain infinitely many 
points up to translation equivalence, in terms of the 
decomposition into irreducible invariant subspaces.
For example, if $\reals^d$ has an \emph{irrational}
invariant subspace $U\leq \reals^d$
(that is, a subspace $\{0\}\neq U\leq \reals^d$ such that
 $U \cap \ints^d = \{0\}$),
then each integer point $z$ with nonzero projection onto $U$
is normalizer equivalent to infinitely many points,
which are not translation equivalent.
This yields another proof of the result of
Herr, Rehn and Schürmann~\cite[Theorem~32]{herrrehnschue15}
that there are infinitely many core points up to translation,
when there is an irrational invariant subspace.

In \cref{t:QI_finitelycorep}, we prove the following:
Suppose that $G\leq\Sym{d}$ is a transitive permutation group acting on
$\reals^d$ by permuting coordinates.
Suppose that $\Fix(G)^{\perp}$ contains no rational 
$G$-invariant subspace other than $\{0\}$ and
$\Fix(G)^{\perp}$ itself.
(A subspace of $ \reals^d$ is rational if 
 it has a basis contained in $\rats^d$.)
Such a group $G$ is called a \emph{QI-group}.
Then there are only finitely many core points up to normalizer
equivalence.

For example, this is the case when $d=p$ is a prime number
(and $G\leq \Sym{p}$ is transitive).
In the case that the group is not 
$2$-homogeneous, there are infinitely many
core points up to translation, 
but these can be obtained from finitely many by multiplying
with invertible integer matrices from the normalizer.

The paper is organized as follows.
In \cref{sec:defs},
we introduce different equivalence relations
for core points and make some elementary observations.
\Cref{sec:orders} collects some elementary 
properties of orders in semisimple algebras.
In \cref{sec:finiteequiv},
we determine when the normalizer equivalence classes
contain infinitely many points up to translation equivalence.
In \cref{sec:qi-groups},
we prove the aforementioned result on QI-groups.
\Cref{sec:finiteequiv,sec:qi-groups}
can mostly be read independently from one another.
Finally, in \cref{sec:app} we give an example to show how 
normalizer equivalence can be applied 
to integer convex optimization problems with suitable symmetries.

\section{Equivalence for core points}
\label{sec:defs}
  Let $V$ be a finite-dimensional vector space 
  over the real numbers $\reals$ 
  and $G$ a finite group acting linearly on $V$.

  \begin{defi}
    An \defemph{orbit polytope} (for $G$)
    is the convex hull of the $G$-orbit of a point $v\in V$.
    It is denoted by 
    \[ \orbpol(G,v) = \conv\{gv\mid g\in G\}.\]
  \end{defi}

  Let $\Lambda\subseteq V$ be a full 
  $\ints$-lattice in $V$, that is, 
  the $\ints$-span of an
  $\reals$-basis of $V$.
  Assume that $G$ maps $\Lambda$ onto itself.   

  \begin{defi}\cite{herrrehnschue13}
    An element $z\in \Lambda$ is called a
    \defemph{core point} (for $G$ and $\Lambda$) if
    the only lattice points in $\orbpol(G,z)$ are its vertices,
    that is, the elements in the orbit $Gz$.
    In other words, $z$ is a core point if
    \[ \orbpol(G,z)\cap \Lambda = Gz.\]
  \end{defi}

    The barycenter
    \[ \frac{1}{\card{G}}
       \sum_{g\in G} gv \in \orbpol(G,v)
    \]
    is always fixed by $G$. 
    If $\Fix_V(G)$, the set of vectors in $V$
    fixed by all $g\in G$, consists only of $0$,
    then the barycenter of each orbit polytope is
    the zero vector.
    In this case, only the zero vector is a core point
    \cite[Remark~3.7, Lemma~3.8]{rehn13diss}.
    
    More generally, the map
    \[ e_1 = 
       \frac{1}{\card{G}}
         \sum_{g\in G} g
    \]
    gives the projection from $V$ onto the fixed space 
    $\Fix_V(G)$~\cite[\S 2.6, Theorem~8]{SerreLRFG},
    and thus yields a decomposition
    $V = \Fix_V(G) \oplus \ker(e_1)$ into $G$-invariant
    subspaces.
    If this decomposition restricts to a decomposition
    of the lattice,
    $\Lambda = e_1\Lambda \oplus (\ker(e_1)\cap \Lambda)$,
    then $e_1z\in \Lambda \cap P(G,z)$ for any  $z\in \Lambda$,
    and so $z$ can only be a core point for $G$
    when $z$ is itself in the fixed space.
    But in general, we do not have such a decomposition,
    since the projection 
    $e_1\Lambda$ may not be contained in $\Lambda$.
     
    An important class of examples where 
    $e_1\Lambda \not\subseteq \Lambda$
    is transitive permutation groups $G \leq \Sym{d}$,
    acting on $V=\reals^d$ by permuting coordinates, 
    and where $\Lambda = \ints^d$.
    The fixed space consists of the vectors with all entries
    equal and is thus generated by the all ones vector
    $\allone:=(1,1,\dotsc,1)^t$.
    For $v=(v_1,\dotsc, v_d)^t$ we have
    $e_1v = (\sum_i v_i)/d \cdot \allone$.
    In particular,  we see that
    $e_1\Lambda$ contains all integer multiples
    of $(1/d)\allone$.
    We can think of $\Lambda$ as being partitioned into 
    \emph{layers}, where a layer consists 
    of all $z\in \Lambda$ with $z^t\allone = k$
    (equivalently, $e_1z= (k/d)\allone$),
    for a fixed integer~$k$.
    
    Returning to general groups of integer matrices, 
    we claim that for each
    $v\in e_1\Lambda $,
    there are core points $z$ with
    $e_1z=v$.
    Namely, among all $z\in \Lambda$
    with $e_1z=v$, 
    there are elements such that 
    $\sum_g\norm{gz}^2$ is minimal,
    and these are core points.

  If $z$ is a core point and $b\in \Fix_{\Lambda}(G)$,
  then $z+b$ is a core point, too,
  because $\orbpol(G,z+b) = \orbpol(G,z) + b$.
  Such core points should be considered as 
  \emph{equivalent}.
  This viewpoint was adopted by 
  Herr, Rehn and Schürmann~\cite{herrrehnschue13,herrrehnschue15}.
  In the present paper, we consider a coarser equivalence relation.
  We write $\GL(\Lambda)$ for the invertible 
  $\ints$-linear maps $\Lambda \to \Lambda$.
  Since $\Lambda$ contains a basis of $V$, we may view
  $\GL(\Lambda)$ as a subgroup of $\GL(V)$.
  (If $V= \reals^d$ and $\Lambda= \ints^d$, then we can identify
  $\GL(\Lambda)$  with $\GL(d,\ints)$,
  the set of matrices over $\ints$ with determinant $\pm 1$.)
  
  By assumption, $G$ is a subgroup of $\GL(\Lambda)$.
  We use the following notation from group theory:
  The \defemph{normalizer} of a finite group $G$ in 
  $\GL(\Lambda)$ is the set
  \[ \N_{\GL(\Lambda)}(G) :=
     \{ S\in \GL(\Lambda) \mid
        \forall g\in G\colon S^{-1} g S \in G
     \}.
  \]  
  The \defemph{centralizer} of $G$ in $\GL(\Lambda)$
  is the set 
  \[ \C_{\GL(\Lambda)}(G) := 
       \{ S\in \GL(\Lambda) \mid
          \forall g\in G\colon
          S^{-1} g S = g
       \}.
  \]

  \begin{defi}\label{df:equiv}
    Two points $z$ and $w$ are called 
    \defemph{normalizer equivalent}
    if there is a point $b\in \Fix_{\Lambda}(G)$ and
    an element $S$ in the normalizer $\N_{\GL(\Lambda)}(G)$ 
    of $G$ in $\GL(\Lambda)$ such that 
    $w = Sz + b$.
    The points are called \defemph{centralizer equivalent}
    if $w= Sz+b$ with $S\in \C_{\GL(\Lambda)}(G)$
    and $b\in \Fix_{\Lambda}(G)$.
    Finally, we call two points $z$ and $w$ 
    \defemph{translation equivalent}
    when $w-z \in \Fix_{\Lambda}(G)$.
  \end{defi}  

  Since $\C_{\GL(\Lambda)}(G)\subseteq \N_{\GL(\Lambda)}(G)$,
  each normalizer equivalence class is a union 
  of centralizer equivalence classes,
  and obviously each centralizer equivalence class
  is a union of translation equivalence classes.
  The definition is motivated by the following simple observation:

  \begin{lemm}\label{l:equiv_core_pts}
    If 
    \[ w = Sz +b
       \quad \text{with}\quad 
       S \in \N_{\GL(\Lambda)}(G)
       \quad \text{and} \quad
                  b \in \Fix_{\Lambda}(G), 
    \]
    then
    $x \mapsto Sx +b$ defines a bijection between
    \[ \orbpol(G,z) \cap \Lambda
       \quad \text{and} \quad
       \orbpol(G,w) \cap \Lambda.
    \]
    In particular, $z$ is a core point for $G$ 
    if and only if $w$ is a core point for $G$.
  \end{lemm}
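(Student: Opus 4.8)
The plan is to realize the affine map $\phi\colon V\to V$, $\phi(x)=Sx+b$, as the claimed bijection and to check that it is compatible with all the relevant structure. Since $S\in\GL(\Lambda)$ is invertible and $b\in\Fix_\Lambda(G)\subseteq\Lambda$, the map $\phi$ is an affine bijection of $V$ that carries $\Lambda$ onto itself: we have $S\Lambda=\Lambda$ because $S\in\GL(\Lambda)$, and translation by the lattice vector $b$ preserves $\Lambda$, so $\phi(\Lambda)=\Lambda$. The only real content is then the polytope identity $\phi(\orbpol(G,z))=\orbpol(G,w)$; once this is in hand, the fact that a bijection commutes with intersections yields $\phi\bigl(\orbpol(G,z)\cap\Lambda\bigr)=\phi(\orbpol(G,z))\cap\phi(\Lambda)=\orbpol(G,w)\cap\Lambda$ at once.

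First I would reduce the polytope identity to a statement about the generating vertices. As $\phi$ is affine it sends convex hulls to convex hulls, so $\phi(\orbpol(G,z))=\conv\{\phi(gz)\mid g\in G\}$, and it suffices to show $\{\phi(gz)\mid g\in G\}=Gw$. This is exactly where the normalizer hypothesis is used, and it is the heart of the argument. Given $h\in G$, set $g=S^{-1}hS$; by the very definition of $\N_{\GL(\Lambda)}(G)$ we have $g\in G$, and the assignment $h\mapsto g$ is a bijection of the finite group $G$. From $hS=Sg$ and the $G$-invariance of $b$ (so $hb=b$) we compute $hw=h(Sz+b)=hSz+hb=Sgz+b=\phi(gz)$. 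Letting $h$ range over $G$, the element $g=S^{-1}hS$ ranges bijectively over $G$, whence $Gw=\{hw\mid h\in G\}=\{\phi(gz)\mid g\in G\}=\phi(Gz)$, and therefore $\phi(\orbpol(G,z))=\orbpol(G,w)$.

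To conclude, I would assemble the two observations: $\phi$ is a bijection of $V$ with $\phi(\Lambda)=\Lambda$ and $\phi(\orbpol(G,z))=\orbpol(G,w)$, so $\phi$ restricts to a bijection $\orbpol(G,z)\cap\Lambda\to\orbpol(G,w)\cap\Lambda$. For the final assertion about core points, I would track the orbits through this bijection: since $\phi(Gz)=Gw$ by the computation above, the equality $\orbpol(G,z)\cap\Lambda=Gz$ holds if and only if its image $\orbpol(G,w)\cap\Lambda=Gw$ holds, which says precisely that $z$ is a core point exactly when $w$ is. I expect no genuine obstacle here; the only point requiring care is the single line $\phi(gz)=hw$, where one must get the conjugation the correct way round, replacing $hS$ by $Sg$ with $g=S^{-1}hS\in G$ and invoking $hb=b$. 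Everything else is formal bookkeeping about affine maps, convex hulls, and lattices.
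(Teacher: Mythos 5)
Your proposal is correct and follows essentially the same route as the paper's proof: both show that the affine map $x\mapsto Sx+b$ carries the vertex set $Gz$ onto $Gw$ via the conjugation $SgS^{-1}$ (you merely run the conjugation in the opposite direction, setting $g=S^{-1}hS$), use that $b$ is fixed by $G$, and combine this with $\phi(\Lambda)=\Lambda$ to conclude. No gaps; your write-up is if anything slightly more explicit about the convex-hull and intersection bookkeeping.
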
  

  \begin{proof}
    The affine bijection 
    $ x \mapsto Sx + b $ maps the orbit polytope
    $\orbpol(G,z)$ to another polytope.
    The vertex $gz$ is mapped to the vertex
    \[
      S gz + b = (SgS^{-1}) Sz + b = hSz +b = h(Sz+b) = hw,
    \]
    where $h= SgS^{-1}\in G$ (since $S$ normalizes $G$). 
    The second to last equality follows as $b$ is fixed by $G$.
    As $SgS^{-1}$ runs through $G$ as $g$ does, it follows that
    $x\mapsto Sx+b$ maps the orbit $Gz$ to the 
    orbit $Gw$ and thus maps the orbit polytope
    $\orbpol(G,z)$ to the orbit polytope $\orbpol(G,w)$.
    Since $x\mapsto Sx+b$ also maps $\Lambda$ onto itself,
    the result follows.    
  \end{proof}

    Notice that a point $w$ is equivalent to $z=0$
    (for any of the equivalence relations
    in \cref{df:equiv})
    if and only if 
    $w = S\cdot 0 +b = b\in \Fix_{\Lambda}(G)$. 
    Any $w\in \Fix_{\Lambda}(G)$ is a core point.
    We call these points the \defemph{trivial core points}.
    
    In the important example of transitive permutation groups,
    the fixed space is one-dimensional.
    More generally, when $V$ is spanned linearly by some 
    orbit~$Gz$, then $\Fix_V(G)$ is spanned by $e_1z$ and thus
    $\dim(\Fix_V(G)) \leq 1$.
    
  \begin{remark}\label{rem:nrmtrans}
    Suppose that $\Fix_V(G)$ has dimension~$1$.
    Then there is at most one $w\in \N_{\GL(\Lambda)}(G)z$ 
    such that $w\neq z$ 
    and $w$ is translation equivalent to $z$.
  \end{remark}
      
  \begin{proof}
    The elements of $\N_{\GL(\Lambda)}(G)$ map
    $\Fix_{\Lambda}(G)$ onto itself and thus act on
    $\Fix_V(G)$ as $\pm 1$.    
    Let $S\in \N_{\GL(\Lambda)}(G)$.
    Suppose $w=Sz$ and $z$ are translation equivalent,
    so that $Sz-z=b\in \Fix_{\Lambda}(G)$.
    Then $b=e_1b =e_1Sz-e_1z = Se_1 z - e_1 z  
            = \pm e_1 z -e_1 z$
    and thus either $Sz=z$ or $Sz = z - 2e_1 z$.
    (The latter case can only occur when
    $2e_1z\in \Lambda$.)    
 \end{proof}   
    
    In particular, if the orbit
    $\N_{\GL(\Lambda)}(G)z$ is infinite,
    then the normalizer equivalence class of 
    a nontrivial core point~$z$
    contains infinitely many translation equivalence
    classes. 
     
    Also notice that when $z$ is a nontrivial core point,
    then $e_1z$ must not be a lattice point.

    Herr, Rehn, and 
  Schürmann~\cite{herrrehnschue15,Herr13_Diss,rehn13diss}
  considered the question of whether the set of core points
  up to translation is finite or infinite
  (in the case where $G$ acts by permuting coordinates).
  We might ask the same question about core points up to
  normalizer equivalence as defined here.
  Also, it is of interest whether our bigger equivalence classes
  contain finitely or infinitely many points
  up to translation.

\begin{example}
   Let $G= \Sym{d}$, the symmetric group on $d$ elements,
   acting on $\reals^d$ by permuting coordinates,
   and $\Lambda=\ints^d$.
   We identify $G$ with the group of all permutation matrices.
   For this group, Bödi, Herr and
   Joswig~\cite{BoediHerrJoswig13} have shown that every
   core point is translation equivalent to a vector
   with all entries $0$ or $1$.
   (Conversely, these vectors are obviously core points.)
   One can show that the normalizer of the group $G$ of
   all permutation matrices in $\GL(d,\ints)$ is generated
   by $-I$ and the group $G$ itself.
   As $G$ is transitive on the subsets of $\{1,\dotsc,d\}$
   of size $k$,
   all $0/1$-vectors with fixed number $k$ of $1$'s are 
   normalizer equivalent.
   A vector $z$ with $k$ ones and $d-k$ zeros is also
   normalizer equivalent to the vector
   $-z + \allone$ with $d-k$ ones and $k$ zeros.
   Thus up to normalizer equivalence, there are only
   $\lfloor d/2 \rfloor +1$ core points.
\end{example}

\begin{example}\label{ex:cycgen}
   Let $G=C_d = \erz{(1,2,\dotsc,d)}$ be a cyclic group,
   again identified with a matrix group
   which acts on $\reals^d$ 
   by permuting the coordinates cyclically.
   For $d=4$ we have a finite normalizer
   (as we will see in \cref{sec:finiteequiv}) but infinitely
   many core points up to normalizer or translation equivalence:
   for example, all the points 
   $(1+m,-m,m,-m)^t$, $m\in \ints$, 
   are core points for $C_4$~\cite[Example~26]{herrrehnschue15}.
   
   If $d=p$ is prime, then we will see that there are only 
   finitely many core points up to normalizer equivalence,
   but for $p\geq 5$ the normalizer is infinite and there 
   are infinitely many core points up to translation equivalence.
   (See \cref{ex:c5} below.)
   
   For $d=8$ (say), 
   the normalizer is infinite \emph{and} there are infinitely many 
   core points up to normalizer equivalence.
   Namely, let $b_1\in \reals^8$ be the first standard basis vector
   and let $v\in \reals^8$ be the vector with entries
   alternating between $1$ and $-1$.
   Then the points
   $b_1 + m v$ for $m\in \ints$ are 
   core points~\cite[Theorem~30]{herrrehnschue15}
   (the construction principle here is the same as above in
   the case $d=4$).
   The circulant 
   $8\times 8$-matrix $S$ with
   first row $(2,1,0,-1,-1,-1,0,1)$
   is contained in the centralizer of $G$ and has infinite order.
   Since $S$ is symmetric and $Sv=v$, we have
   $v^t S^k b_1  = v^t b_1 =1$ for all $k\in \ints$ and thus 
   the vectors $S^kb_1 + mv $ are all different for different
   pairs $(k,m)\in \ints^2$.
   And since we also have $S\allone = \allone$,
   where $\allone = (1,1,\dotsc,1)^t$ spans the fixed space,
   we also see that different vectors of the form
   $S^kb_1 + mv$ can not be translation equivalent.   
   Finally, one can show that  
   the subgroup generated by $S$ has finite index in the 
   normalizer $\N_{\GL(8,\ints)}(C_8)$.
   Thus at most finitely many of the points 
   $b_1 + mv$ can be normalizer equivalent to each other.
\end{example}

It is sometimes easier to work with the 
centralizer $\C_{\GL(\Lambda)}(G)$ instead of the normalizer
$\N_{\GL(\Lambda)}(G)$,
which yields a slightly finer equivalence relation.
By the following simple observation, the 
$\C_{\GL(\Lambda)}(G)$-equivalence classes can not be much 
smaller than the
$\N_{\GL(\Lambda)}(G)$-equivalence classes:

\begin{lemm}\label{l:normcentfin}
   $\card{\N_{\GL(\Lambda)}(G):\C_{\GL(\Lambda)}(G)}$ is finite.
\end{lemm}

\begin{proof}
     The factor group $\N_{\GL(\Lambda)}(G)/\C_{\GL(\Lambda)}(G)$
     is isomorphic to a subgroup of 
     $\Aut(G)$~\cite[Corollary~X.19]{IsaacsFGT}, 
     and $\Aut(G)$ is finite,
     since $G$ itself is finite by assumption.
\end{proof}

\section{Preliminaries on orders}
\label{sec:orders}
In this section, we collect some simple properties
of \emph{orders} in semisimple algebras over $\rats$.
Orders are relevant for us since the centralizer
$\C_{\GL(\Lambda)}(G)$ can be identified with the 
\emph{unit group} of such an order, as we explain below.

Recall the following definition~\cite{reinerMO}:
  Let $A$ be a finite-dimensional algebra over $\rats$
  (associative, with one).  
  An \defemph{order} (or \defemph{$\ints$-order})
  in $A$ is a subring $R \subset A$
  which is finitely
  generated as a $\ints$-module and 
  contains a $\rats$-basis of $A$. 
  (Here, ``subring'' means in particular that $R$ and
  $A$ have the same multiplicative identity.)   
  In other words, an order is a full $\ints$-lattice
  in $A$ which is at the same time a subring of $A$.  
  
For the moment, assume
that $W$ is a finite-dimensional vector space over the rational
numbers $\rats$,
and let $\Lambda$ be a full $\ints$-lattice in $W,$
that is, the $\ints$-span of a $\rats$-basis of $W,$
and $G$ a finite subgroup of $\GL(\Lambda)$.
(In the situation of \cref{sec:defs},
we can take for $W$ the $\rats$-linear span
of $\Lambda$.)
Let
$A:=\enmo_{\rats G} (W)$ be the ring of $\rats G$-module
endomorphisms of $W$, that is, the set of linear maps
$\alpha\colon W\to W$ such that
$\alpha(gv) = g\alpha(v)$ for all $v\in W$ and $g\in G$.
This is just the centralizer of $G$ in the ring of all
$\rats$-linear endomorphisms of $W$.

We claim that
  \[ R:= 
         \{ \alpha \in A
            \mid
            \alpha(\Lambda)\subseteq \Lambda
         \}
  \]
is an order in $A$.
Namely, choose a $\ints$-basis of $\Lambda$.
This basis is also a $\rats$-basis of $W$.
By identifying linear maps with matrices with respect to the
chosen basis,
$A$ gets identified with the centralizer of $G$
in the set of \emph{all} $d\times d$ matrices over $\rats$,
and $R$ gets identified with the centralizer of $G$ in
the set of 
  $d\times d$ matrices with entries in $\ints$.
It follows that $R$ is finitely generated as a $\ints$-module,
and for every $\alpha\in A$ there
is an $m\in \ints$ such that $m\alpha \in R$.
Thus $R$ is an order of $A$.
(Also, $R\iso \enmo_{\ints G}(\Lambda)$ naturally.)
  
Moreover, the centralizer
$\C_{\GL(\Lambda)}(G)$ is exactly the set of invertible
elements of $R$, that is, the unit group 
$\Units(R)$ of $R$.
  For this reason, it is somewhat easier to work
  with $\C_{\GL(\Lambda)}(G)$ instead of
  the normalizer $\N_{\GL(\Lambda)}(G)$.
The unit group $\Units(R)$ of an order $R$ is a finitely
generated (even finitely presented) group~\cite[Section~3]{Kleinert94}.
Finding explicit generators of $\Units(R)$
(and relations between them)
is in general a difficult task, but there do exist algorithms
for this purpose~\cite{BraunCNS15}.
The situation is somewhat better when
$R$ is commutative, for example when
$R \iso \ints A$, where $A$ is a finite 
abelian group~\cite{FaccindeGraafPlesken13}.
Moreover, it is quite easy to give generators of a 
subgroup of $\Units(\ints A)$ which has finite index
in $\Units(\ints A)$~\cite{Hoechsmann92,MarciniakSehgal05}.
  
  We now collect some general elementary facts about orders
  that we need.
  (For a comprehensive treatment of orders (not only over
  $\ints$), we refer the reader to Reiner's book on 
  maximal orders~\cite{reinerMO}.
  For unit groups of orders,
  see the survey article by Kleinert~\cite{Kleinert94}.)

  \begin{lemm}\label{l:intersectorders}
    Let $R_1$ and $R_2$ be two orders in the
    $\rats$-algebra $A$.
    Then $R_1 \cap R_2$ is also an order
    in $A$.
  \end{lemm}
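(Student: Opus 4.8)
The plan is to verify the three defining properties of an order for $R_1\cap R_2$ directly: that it is a subring containing the identity, that it is finitely generated as a $\ints$-module, and that it contains a $\rats$-basis of $A$. The first two are routine; the third is the essential point.

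First I would dispose of the ring-theoretic part. Since both $R_1$ and $R_2$ are subrings of $A$ sharing the multiplicative identity $1$ of $A$, their intersection is closed under addition and multiplication and contains $1$; hence $R_1\cap R_2$ is a subring of $A$ with the same identity. For finite generation, I would invoke that $\ints$ is Noetherian: as $R_1$ is an order, it is finitely generated over $\ints$, and every $\ints$-submodule of a finitely generated $\ints$-module is again finitely generated. Since $R_1\cap R_2\subseteq R_1$ is such a submodule, it is finitely generated over $\ints$.

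The main step is to show that $R_1\cap R_2$ still contains a $\rats$-basis of $A$. Here I would use the elementary fact that for any order $R$ in $A$ and any $a\in A$ there is a nonzero integer $m$ with $ma\in R$: this holds because $R$ contains a $\rats$-basis of $A$, so $A=\rats R$, and clearing denominators in the expansion of $a$ over that basis produces such an $m$. (This is exactly the phenomenon already observed for the order $R=\enmo_{\ints G}(\Lambda)$ earlier in \cref{sec:orders}.) Concretely, choose a $\rats$-basis $b_1,\dotsc,b_n$ of $A$ lying in $R_1$, which exists since $R_1$ is an order. For each $i$, pick a nonzero integer $m_i$ with $m_i b_i\in R_2$, and set $m=m_1\cdots m_n$. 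Then each $mb_i$ lies in $R_2$, and also in $R_1$ because $R_1$ is a $\ints$-module closed under integer scaling; hence $mb_1,\dotsc,mb_n\in R_1\cap R_2$. Since these vectors differ from the $b_i$ only by the nonzero scalar $m$, they again form a $\rats$-basis of $A$, so $R_1\cap R_2$ contains a $\rats$-basis.

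Combining the three properties shows that $R_1\cap R_2$ is an order in $A$. I expect the only subtlety to be the denominator-clearing observation, which is the one place where the basis hypothesis (rather than mere finite generation) is genuinely used; everything else is formal.
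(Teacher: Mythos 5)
Your proof is correct and takes essentially the same denominator-clearing approach as the paper. The only cosmetic difference is that the paper finds a single nonzero integer $m$ with $mR_2\subseteq R_1$ (so that $mR_2\subseteq R_1\cap R_2$ already contains a $\rats$-basis), whereas you scale a basis of $A$ lying in $R_1$ element by element into $R_2$; both arguments rest on the same observation that elements of $A$ have integer multiples inside any order.
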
  

  \begin{proof}
      Clearly, $R_1\cap R_2$ is a subring.
      
      Since $R_2$ is finitely generated over $\ints$ and 
      $\rats R_1=A$,
      there is a non-zero integer $m \in \ints$ with 
      $mR_2 \subseteq R_1$.
      Thus $mR_2 \subseteq R_1 \cap R_2$.
      Since $mR_2$ contains a $\rats$-basis of $A$,
      it follows that $R_1\cap R_2$ contains such a basis.
      As a submodule of a finitely generated $\ints$-module,
      $R_1 \cap R_2$ is again finitely generated.
      Thus $R_1\cap R_2$ is an order of $A$.  
  \end{proof}

  \begin{lemm}\label{l:suborder}
    Let $R_1$ and $R_2$ be orders in the 
    $\rats$-algebra $A$ with 
    $R_1 \subseteq R_2$.
    Then 
    $\card{\Units(R_2) : \Units(R_1)}$ is finite.
  \end{lemm}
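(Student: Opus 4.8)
The plan is to produce a subgroup of $\Units(R_2)$ that simultaneously has finite index in $\Units(R_2)$ and is contained in $\Units(R_1)$. I would start from the observation that, since $R_1$ and $R_2$ are both full $\ints$-lattices in $A$ with $R_1\subseteq R_2$, the quotient abelian group $R_2/R_1$ is finite. Writing $m$ for its exponent (the order of $R_2/R_1$ would serve equally well), we get $mR_2\subseteq R_1$. As $m$ is a central scalar, the set $\mathfrak{a}:=mR_2$ is a two-sided ideal of $R_2$, and by construction $\mathfrak{a}\subseteq R_1$. Fixing a $\ints$-basis of $R_2$ identifies it with $\ints^{n}$ as an abelian group, so the quotient ring $R_2/\mathfrak{a}=R_2/mR_2\iso(\ints/m\ints)^{n}$ is finite; in particular its unit group $(R_2/\mathfrak{a})^{\times}$ is finite.

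Next I would introduce the principal congruence subgroup
\[
   U := \{\,u\in\Units(R_2)\mid u-1\in\mathfrak{a}\,\}.
\]
Reduction modulo $\mathfrak{a}$ is a ring homomorphism $R_2\to R_2/\mathfrak{a}$ which carries $\Units(R_2)$ into $(R_2/\mathfrak{a})^{\times}$, and $U$ is exactly the kernel of the resulting group homomorphism $\Units(R_2)\to(R_2/\mathfrak{a})^{\times}$. Since the target is finite, $U$ has finite index in $\Units(R_2)$.

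It then remains to check that $U\subseteq\Units(R_1)$, and here I would use that $\mathfrak{a}$ is an ideal. For $u\in U$ we have $u=1+a$ with $a\in\mathfrak{a}\subseteq R_1$, so $u\in R_1$. For the inverse, $u^{-1}-1=-u^{-1}(u-1)=-u^{-1}a$ lies in $\mathfrak{a}$ because $\mathfrak{a}$ is a left ideal of $R_2$ and $u^{-1}\in R_2$; hence $u^{-1}\in 1+\mathfrak{a}\subseteq R_1$ as well. Thus $u$ and $u^{-1}$ both lie in $R_1$, which means $u\in\Units(R_1)$. Combining this with $\card{\Units(R_2):U}<\infty$ and $U\subseteq\Units(R_1)\subseteq\Units(R_2)$ yields the claim. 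The only points that require a little care are the verification that $U$ is a genuine subgroup lying inside $R_1$ (which is exactly where the two-sided ideal property of $\mathfrak{a}$ enters) and the finiteness of $R_2/\mathfrak{a}$; both are routine, so I do not anticipate a serious obstacle. If one prefers a basis-free formulation, the same argument goes through with $\mathfrak{a}$ replaced by the conductor $\{\,x\in R_2\mid xR_2\subseteq R_1,\ R_2x\subseteq R_1\,\}$, which likewise contains $mR_2$ and is the largest two-sided $R_2$-ideal inside $R_1$.
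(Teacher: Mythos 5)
Your proposal is correct and is essentially the paper's own argument: both hinge on finding a nonzero integer $m$ with $mR_2\subseteq R_1$ and observing that units of $R_2$ congruent to $1$ modulo $mR_2$ (together with their inverses) land in $R_1$, hence in $\Units(R_1)$. The paper phrases this by showing directly that units in the same residue class mod $mR_2$ lie in the same coset of $\Units(R_1)$, bounding the index by $\card{R_2:mR_2}$, whereas you package the same computation as a principal congruence subgroup realized as the kernel of $\Units(R_2)\to(R_2/mR_2)^{\times}$; the difference is purely presentational.
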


  \begin{proof}    
    There exists a non-zero integer $m$ such that
    $m R_2 \subseteq R_1$.
    Suppose that $u$, $v\in \Units(R_2)$ are such that
    $u-v \in mR_2$.
    Then $u \in v + mR_2$ and thus
    $uv^{-1} \in 1 + mR_2 \subseteq R_1$.
    Similarly, $vu^{-1} \in 1 + mR_2 \subseteq R_1$.
    Thus $uv^{-1} \in \Units(R_1)$.
    This shows 
    $\card{\Units(R_2): \Units(R_1)} 
      \leq \card{R_2 : mR_2} < \infty$,
      as claimed.
  \end{proof}

  \begin{cor}\label{c:changeorder}
    Let $R_1$ and $R_2$ be two orders in the
    $\rats$-algebra $A$.
    Then $\Units(R_1)$ is finite if and only if
    $\Units(R_2)$ is finite.    
  \end{cor}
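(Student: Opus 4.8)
The plan is to reduce the statement to the two preceding lemmas by passing to a common suborder. The key observation is that $R_1$ and $R_2$ need not be comparable, so I would not try to relate them directly; instead I would set $R_0 := R_1 \cap R_2$. By \cref{l:intersectorders}, $R_0$ is again an order in $A$, and by construction $R_0 \subseteq R_1$ and $R_0 \subseteq R_2$ are inclusions of orders.

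Next I would apply \cref{l:suborder} twice, to the inclusions $R_0 \subseteq R_1$ and $R_0 \subseteq R_2$. This gives that both indices $\card{\Units(R_1) : \Units(R_0)}$ and $\card{\Units(R_2) : \Units(R_0)}$ are finite. In other words, $\Units(R_0)$ sits as a finite-index subgroup inside each of the unit groups $\Units(R_1)$ and $\Units(R_2)$.

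Finally I would invoke the elementary group-theoretic fact that a group $U$ is finite if and only if it possesses a finite subgroup of finite index, since $\card{U} = \card{U : H} \cdot \card{H}$ whenever $H \leq U$. Assuming $\Units(R_1)$ is finite, its subgroup $\Units(R_0)$ is then finite; since $\Units(R_0)$ has finite index in $\Units(R_2)$, it follows that $\Units(R_2)$ is finite as well. The reverse implication follows by symmetry, interchanging the roles of $R_1$ and $R_2$, so the two unit groups are finite simultaneously.

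Because both ingredients are already in hand, I do not expect a genuine obstacle here; the corollary is essentially a formal consequence of \cref{l:intersectorders,l:suborder}. The only point requiring a moment's care is that finiteness must be propagated in \emph{both} directions across a finite-index inclusion, and this is precisely why the symmetric device of the common suborder $R_0$ is the right move rather than attempting a one-sided comparison of $R_1$ with $R_2$.
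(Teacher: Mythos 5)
Your proposal is correct and follows exactly the paper's own argument: form the common suborder $R_1\cap R_2$ via \cref{l:intersectorders}, apply \cref{l:suborder} to both inclusions, and conclude from the finite indices. The paper compresses the final group-theoretic step into ``the result follows,'' but the content is the same.
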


  \begin{proof}
    By \cref{l:intersectorders}, 
    $R_1 \cap R_2$ is an order.
    By \cref{l:suborder}, the index 
    $\card{ \Units(R_i) : \Units(R_1\cap R_2) }$
    is finite for $i=1$, $2$.
    The result follows.    
  \end{proof}
 
\section{Finiteness of equivalence classes}
\label{sec:finiteequiv}
  In this section we determine for which groups $G$
  the normalizer equivalence classes are finite or not.
  We use the notation introduced in \cref{sec:defs}.
  Thus $G$ is a finite group acting on the
  finite-dimensional, real vector space $V$,
  and $\Lambda\subset V$ is a full $\ints$-lattice in $V$
  which is stabilized by $G$.
  A subspace $U \leq V$ is called 
  \defemph{$\Lambda$-rational} 
  if $U\cap \Lambda$ contains a basis of $U$, and
  \defemph{$\Lambda$-irrational}
  if $U\cap \Lambda = \{0\}$.
  If $U$ is an irreducible $\reals G$-submodule, 
  then $U$ is either $\Lambda$-rational or 
  $\Lambda$-irrational.
  
  \begin{thm}\label{t:finitecrit}
    Let 
    \[ V = 
       U_1 \oplus \dotsb \oplus U_r
    \]
    be a decomposition of $V$ into irreducible
    $\reals G$-subspaces.    
    Then 
    $\N_{\GL(\Lambda)}(G)$ has finite order if and only if
    all the $U_i$'s are $\Lambda$-rational and pairwise non-isomorphic.
  \end{thm}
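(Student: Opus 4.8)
The plan is to reduce the statement to a finiteness criterion for the unit group of an order and then to translate that criterion into the stated representation-theoretic condition.

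First I would pass from the normalizer to the centralizer. By \cref{l:normcentfin} the index $\card{\N_{\GL(\Lambda)}(G):\C_{\GL(\Lambda)}(G)}$ is finite, so $\N_{\GL(\Lambda)}(G)$ is finite if and only if $\C_{\GL(\Lambda)}(G)$ is. As noted in \cref{sec:orders}, the centralizer is the unit group $\Units(R)$ of the order $R\iso\enmo_{\ints G}(\Lambda)$ in the semisimple $\rats$-algebra $A=\enmo_{\rats G}(W)$, where $W=\rats\Lambda$; and by \cref{c:changeorder} finiteness of $\Units(R)$ depends only on $A$. I would then invoke the classical finiteness criterion for unit groups of orders (a consequence of the Dirichlet unit theorem and reduction theory; see \cite{Kleinert94}): writing $A=\prod_i A_i$ as a product of simple algebras, $\Units(R)$ is finite if and only if each $A_i\otimes_\rats\reals$ is a division ring, i.e.\ isomorphic to $\reals$, $\compl$, or $\quats$.

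Next I would identify the relevant real algebra. Since $G$ is finite, base change commutes with forming $G$-endomorphisms, so $A\otimes_\rats\reals\iso\enmo_{\reals G}(V)$ (using $W\otimes_\rats\reals=V$). Grouping the $U_i$ into isotypic components gives $\enmo_{\reals G}(V)\iso\prod_j\mat_{m_j}(\Delta_j)$, where $\Delta_j$ is the real, complex, or quaternionic Schur division algebra of the $j$-th type and $m_j$ its multiplicity. Thus the $U_i$ are pairwise non-isomorphic exactly when every $m_j=1$, that is, exactly when $A\otimes_\rats\reals$ is a product of division rings.

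The main point is the rationality dictionary. I would decompose $W=\bigoplus_i S_i^{a_i}$ into $\rats$-irreducibles, so that $A_i=\mat_{a_i}(E_i)$ with $E_i=\enmo_{\rats G}(S_i)$ and $\enmo_{\reals G}(S_i\otimes_\rats\reals)=E_i\otimes_\rats\reals$; hence $S_i$ stays irreducible over $\reals$ precisely when $E_i\otimes_\rats\reals$ is a division ring. The key observation is that a real irreducible $U$ is $\Lambda$-rational exactly when $U\cap W\neq 0$: if $S_i\otimes_\rats\reals$ is itself irreducible it is spanned by $S_i\cap\Lambda$ and so is $\Lambda$-rational, whereas any proper real constituent $U$ of a splitting $S_i\otimes_\rats\reals$ meets $W$ in a proper $\rats G$-submodule of the copy of $S_i$, which is $0$ by irreducibility of $S_i$, so $U$ is $\Lambda$-irrational. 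Consequently all the $U_i$ are $\Lambda$-rational if and only if no $S_i$ splits over $\reals$, i.e.\ every $E_i\otimes_\rats\reals$ is a division ring.

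Finally I would assemble the two translations. Combining them, all $U_i$ are $\Lambda$-rational and pairwise non-isomorphic if and only if, for every $i$, one has $a_i=1$ and $E_i\otimes_\rats\reals$ is a division ring; this is exactly the condition that each $A_i\otimes_\rats\reals=\mat_{a_i}(E_i\otimes_\rats\reals)$ is a division ring, which by the cited criterion is equivalent to finiteness of $\Units(R)$, hence of $\N_{\GL(\Lambda)}(G)$. I expect the main obstacle to be the rationality dictionary of the third paragraph—pinning down precisely how $\Lambda$-(ir)rationality of a real constituent records the splitting of the corresponding rational irreducible over $\reals$—together with correctly importing the arithmetic finiteness criterion, which is the one genuinely non-elementary ingredient.
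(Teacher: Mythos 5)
Your proposal is correct and follows essentially the same route as the paper: reduce to the centralizer via \cref{l:normcentfin}, identify it with the unit group of the order $\enmo_{\ints G}(\Lambda)$ in $\enmo_{\rats G}(\rats\Lambda)$, and apply the Hey--Dirichlet finiteness criterion for unit groups of orders (which the paper splits into \cref{c:multfree} and \cref{l:genquats}) together with the translation between $\Lambda$-rationality of the real constituents and the condition that each $\reals\otimes_{\rats}D_i$ be a division ring. The only cosmetic differences are that you import the unit-group criterion as a single classical package rather than deriving it from Hey's theorem, and you phrase the rationality dictionary via $U\cap W$ where the paper argues directly with $S_i:=U_i\cap\rats\Lambda$ and the Noether--Deuring theorem.
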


  The proof of \cref{t:finitecrit} 
  involves some non-trivial representation and number theory.
  By \cref{l:normcentfin}, 
  the normalizer $\N_{\GL(\Lambda)}(G)$ is finite if and only if
  the centralizer $\C_{\GL(\Lambda)}(G)$ is finite.
  As remarked earlier, 
  the centralizer can naturally be identified with the set of units 
   of the ring $\enmo_{\ints G}(\Lambda)$,
  and $\enmo_{\ints G}(\Lambda)$ is an order in the 
  $\rats$-algebra
  $\enmo_{\rats G}( \rats \Lambda) $,
  where $\rats \Lambda$ denotes the $\rats$-linear span 
  of $\Lambda$. 
  For this reason, it is more convenient to work with the 
  $\rats$-vector space $W:= \rats \Lambda$.
  (We get back our $V$ from $W$ by scalar extension,
  that is, $V\iso \reals \otimes_{\rats} W$.)

  Fix a decomposition of $W= \rats \Lambda$ into 
  simple modules:
  \[ W \iso m_1 S_1 \oplus \dotsb \oplus m_r S_r,
     \quad m_i \in \nats,
  \]
  where we assume that $S_i \not\iso S_j$ for
  $i\neq j$.
  Set $D_i := \enmo_{\rats G} (S_i)$, which is by 
  Schur's lemma~\cite[(3.6)]{lamFC}
  a division ring, and finite-dimensional over $\rats$.

  \begin{lemm}\label{l:endo_decom}
    With the above notation, we have
    \[ \enmo_{\rats G}(W)
       \iso
       \mat_{m_1}(D_1) \times \dotsb \times 
       \mat_{m_r}(D_r),
    \]  
    where $\mat_m(D)$ denotes the ring of $m\times m$ matrices
    with entries in $D$.
    If $R_i$ is an order in $D_i$ for each $i$, then
    \[ R:=
       \mat_{m_1}(R_1) \times \dotsb \times
       \mat_{m_r}(R_r)
    \]
    is an order in $\enmo_{\rats G}(W)$.
  \end{lemm}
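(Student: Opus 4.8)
The plan is to prove the two assertions in turn. The first part—the ring isomorphism $\enmo_{\rats G}(W) \iso \mat_{m_1}(D_1) \times \dotsb \times \mat_{m_r}(D_r)$—is the classical structure theorem for endomorphism rings of semisimple modules. Since $W \iso m_1 S_1 \oplus \dotsb \oplus m_r S_r$ with the $S_i$ pairwise non-isomorphic simple modules, Schur's lemma gives $\enmo_{\rats G}(S_i) = D_i$ a division ring and $\Hom_{\rats G}(S_i, S_j) = 0$ for $i \neq j$. I would first observe that any $\rats G$-endomorphism of $W$ must respect the isotypic decomposition $W \iso \bigoplus_i m_i S_i$, since it cannot mix non-isomorphic isotypic components; this reduces the computation to $\enmo_{\rats G}(W) \iso \prod_i \enmo_{\rats G}(m_i S_i)$. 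Then, writing an endomorphism of $m_i S_i \iso S_i^{\oplus m_i}$ as an $m_i \times m_i$ matrix of maps $S_i \to S_i$, each entry lies in $D_i = \enmo_{\rats G}(S_i)$, and composition corresponds to matrix multiplication, giving $\enmo_{\rats G}(m_i S_i) \iso \mat_{m_i}(D_i)$. This is standard and I would cite it rather than belabor the details; it is essentially the Artin--Wedderburn computation applied to the commutant.

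\textbf{The order statement.} For the second assertion, I must check that if each $R_i$ is an order in $D_i$, then $R := \mat_{m_1}(R_1) \times \dotsb \times \mat_{m_r}(R_r)$ is an order in the algebra $A := \enmo_{\rats G}(W)$, identified via the isomorphism above. By the definition recalled in \cref{sec:orders}, I need three things: that $R$ is a subring containing the same identity as $A$, that $R$ is finitely generated as a $\ints$-module, and that $R$ contains a $\rats$-basis of $A$ (equivalently, $\rats R = A$). The subring and identity claims are immediate, since each $\mat_{m_i}(R_i)$ is a subring of $\mat_{m_i}(D_i)$ containing the identity matrix, and the product of such is a subring of the product algebra containing its identity. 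Finite generation over $\ints$ follows because each $R_i$, being an order in $D_i$, is finitely generated over $\ints$; hence $\mat_{m_i}(R_i)$ is a finitely generated $\ints$-module (a matrix ring over a finitely generated module is finitely generated, being a finite direct sum of copies of $R_i$), and a finite product of such is finitely generated. Finally, since each $R_i$ contains a $\rats$-basis of $D_i$, the matrix entries range over a spanning set, so $\rats \cdot \mat_{m_i}(R_i) = \mat_{m_i}(D_i)$, and taking the product gives $\rats R = A$.

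\textbf{Main obstacle.} There is no serious obstacle here: both parts are essentially bookkeeping built on Schur's lemma and the definition of an order. The only point requiring mild care is making the two parts compatible—namely, verifying that the abstract ring isomorphism of the first part can be chosen so that the lattice $R$ really does sit inside $A$ as an order, rather than merely being abstractly isomorphic to one. This is automatic once we fix the isomorphism $\enmo_{\rats G}(W) \iso \prod_i \mat_{m_i}(D_i)$ and regard $R$ as a subset through that fixed identification, which is how the statement is phrased. I would therefore present the second part as a direct verification of the three defining properties of an order, flagging that the three conditions hold componentwise and are preserved under finite direct products.
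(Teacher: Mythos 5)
Your proposal is correct and follows essentially the same route as the paper, which simply cites the standard Artin--Wedderburn computation of the endomorphism ring of a semisimple module for the first assertion and notes that the order claim is then an easy direct verification. You have merely spelled out the details (isotypic decomposition plus Schur's lemma, then the three defining properties of an order checked componentwise), which is exactly what the paper leaves implicit.
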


  \begin{proof}
    The first assertion is a standard observation, 
    used, for example, in one proof of the 
    Wedderburn-Artin structure theorem for semisimple 
    rings~\cite[Thm.~3.5 and proof]{lamFC}.
    The assertion on orders is then easy. 
  \end{proof}

  In particular, the group of units of $R$ is then isomorphic 
  to
  the direct product of groups of the form
  $\GL(m_i, R_i)$.
  To prove \cref{t:finitecrit}, 
  in view of \cref{c:changeorder},
  it suffices to determine when all these
  groups are finite.  
  The following is a first step toward the proof of 
  the theorem:

  \begin{cor}\label{c:multfree}
    If some $m_i > 1$, then
    $\Units(R)$
    (and thus $\N_{\GL(\Lambda)}(G)$)
    is infinite.
  \end{cor}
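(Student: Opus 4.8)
The plan is to exhibit infinitely many units directly inside one of the matrix factors. By \cref{l:endo_decom} and the remark following it, we have $\Units(R)\iso \GL(m_1,R_1)\times\dotsb\times\GL(m_r,R_r)$, so it suffices to show that $\GL(m_i,R_i)$ is infinite for an index $i$ with $m_i>1$.

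First I would observe that $R_i$ is infinite as an additive group: being an order in the division ring $D_i$, it is a full $\ints$-lattice in $D_i$ and in particular contains the infinitely many integer multiples of the identity. Fixing such an $i$ with $m_i\geq 2$, I would then consider the elementary matrices $I+aE_{12}$, where $a$ ranges over $R_i$ and $E_{12}$ denotes the matrix unit with a single $1$ in position $(1,2)$ and zeros elsewhere. Since $E_{12}^2=0$, each such matrix is invertible with inverse $I-aE_{12}$, hence lies in $\GL(m_i,R_i)$; and distinct values of $a$ give distinct matrices. This produces infinitely many units, so $\Units(R)$ is infinite.

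It remains to transfer this conclusion to $\N_{\GL(\Lambda)}(G)$. By \cref{c:changeorder}, once the unit group of one order in $A=\enmo_{\rats G}(W)$ is infinite, the unit group of every order in $A$ is infinite; applying this to the order $\enmo_{\ints G}(\Lambda)$, whose unit group is exactly $\C_{\GL(\Lambda)}(G)$, I conclude that $\C_{\GL(\Lambda)}(G)$ is infinite. Finally, \cref{l:normcentfin} shows that $\C_{\GL(\Lambda)}(G)$ has finite index in $\N_{\GL(\Lambda)}(G)$, so $\N_{\GL(\Lambda)}(G)$ is infinite as well.

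There is no serious obstacle here; the only point requiring a moment's care is that $R_i$ may be noncommutative, but the transvection argument is insensitive to this, since $E_{12}^2=0$ holds over an arbitrary ring and guarantees the inverse formula. The corollary is genuinely a first, easy step: all the representation- and number-theoretic content of \cref{t:finitecrit} lies in the converse direction, where one must analyze precisely when $\GL(1,R_i)=\Units(R_i)$ is finite for a multiplicity-free, $\Lambda$-rational decomposition.
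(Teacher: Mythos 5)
Your proof is correct and follows essentially the same route as the paper: both reduce to showing $\GL(m_i,R_i)$ is infinite via \cref{l:endo_decom}, the paper by noting it contains the infinite group $\GL(m_i,\ints)$ and you by exhibiting the transvections $I+aE_{12}$ explicitly (which is the standard reason $\GL(m_i,\ints)$ is infinite in the first place). Your explicit transfer to $\N_{\GL(\Lambda)}(G)$ via \cref{c:changeorder} and \cref{l:normcentfin} is exactly the mechanism the paper invokes in the surrounding discussion.
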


  \begin{proof}
     $\Units(R)$ contains a subgroup isomorphic to
     $\GL(m_i, R_i)$, which contains the group 
     $\GL(m_i, \ints)$.
     This group is infinite if $m_i>1$.
  \end{proof}
  
  To continue with the proof of 
  \cref{t:finitecrit},
  we have to look at the units of an order $R_i$ in $D_i$.
  We will need extension of scalars
  for algebras over a field via tensor products, as explained 
  in \cite[Chapter~3]{FarbDennis93}. 
  Thus for a $\rats$-algebra~$A$, 
  we get an $\reals$-algebra denoted by $\reals \otimes_{\rats} A$.
  We use the following theorem of Käte Hey
  which can be seen as a generalization of Dirichlet's unit theorem:

  \begin{thm}\cite[Theorem~1]{Kleinert94}
    \label{t:kaetehey}
    Let $D$ be a finite-dimensional division algebra
    over $\rats$,
    and let $R$ be an order of $D$
    with unit group $\Units(R)$.
    Set
    \[  S = \{ d\in \reals \otimes_{\rats} D
               \mid
               (\det d)^2 = 1
            \}.
    \]
    Then $S/\Units(R)$ is compact.
    (Here $\det d$ refers to the action of $d$ 
    as linear operator on $\reals \otimes_{\rats} D$.
    One can also use the reduced norm, of course.)
  \end{thm}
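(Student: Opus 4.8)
The plan is to translate the statement into one about lattices and then run the geometry of numbers. Write $A := \reals\otimes_{\rats}D$ and $n := \dim_{\rats}D = \dim_{\reals}A$, fix a Euclidean structure on $A$, and regard $R$ as a full lattice in $A$. Each $d\in A$ acts on $A$ by left multiplication $L_d\colon x\mapsto dx$, and $\det d$ in the statement is $\det L_d$, so $S=\{d\in A : (\det L_d)^2=1\}$ is a closed subgroup of the unit group $A^{\times}$ (determinants are multiplicative) acting on $A$ preserving Lebesgue measure. First I would record that $\Units(R)\subseteq S$ and that $\Units(R)$ is discrete: a unit $u$ maps the lattice $R$ bijectively onto itself, hence $\det L_u=\pm1$, and $\Units(R)\subseteq R$ is discrete in $A$. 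The stabilizer in $S$ of $R$ under $s\mapsto sR$ is exactly $\Units(R)$ (if $sR=R$ then $s=s\cdot 1\in R$ and $s^{-1}\in R$). Thus $s\mapsto sR$ induces a continuous bijection of $S/\Units(R)$ onto the orbit $\mathcal{O}:=\{sR : s\in S\}$ inside the space $\mathcal{L}$ of full lattices in $A$, and everything reduces to showing that $\mathcal{O}$ is compact.

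Second, I would prove that $\mathcal{O}$ is relatively compact by Mahler's compactness criterion, verifying its two hypotheses. The covolume is constant, since $\operatorname{covol}(sR)=\lvert\det L_s\rvert\,\operatorname{covol}(R)=\operatorname{covol}(R)$. There is a uniform lower bound on the length of nonzero lattice vectors: a nonzero vector of $sR$ has the form $sr$ with $0\neq r\in R$, and since $D$ is a division algebra $r$ is invertible, while $rR\subseteq R$ forces the integral operator $L_r$ to satisfy $\det L_r\in\ints\setminus\{0\}$, whence $\lvert\det L_{sr}\rvert=\lvert\det L_r\rvert\geq 1$. On the other hand $\lvert\det L_x\rvert\leq C^{\,n}\lVert x\rVert^{n}$ for a fixed constant $C$ (comparison of the operator determinant with a fixed norm on the finite-dimensional space $A$), so $\lVert sr\rVert\geq C^{-1}=:\delta>0$ uniformly in $s$ and $r$. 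This is precisely the arithmetic input replacing the ``no small norms'' estimate in Dirichlet's theorem, and it is where the division algebra hypothesis enters decisively.

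The main obstacle is the remaining step, namely that $\mathcal{O}$ is closed in $\mathcal{L}$; together with relative compactness this makes $\mathcal{O}$ compact, and the continuous bijection $S/\Units(R)\to\mathcal{O}$ a homeomorphism. Concretely, one must show that a Mahler limit $L=\lim_k s_kR$ is again of the form $sR$ with $s\in S$. Each $s_kR$ is a right $R$-module lattice, and the condition $L\cdot R\subseteq L$ survives in the limit, so $L$ is a full right $R$-lattice of the correct covolume; the difficulty is to exclude that a limit of principal right ideals becomes non-principal, which is the genuine arithmetic content (and where the finiteness of the class number of $R$ is implicitly used). I would settle this by reduction theory for arithmetic groups: $S$ is the group of real points of the reductive norm-one $\rats$-group of $A$, this group is $\rats$-anisotropic precisely because $D$ is a division algebra (it has no nontrivial unipotents), and $\Units(R)$ is an arithmetic subgroup, so Godement's compactness criterion (in the form of Borel and Harish-Chandra) yields that the orbit is closed and the quotient compact. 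An alternative is Hey's original analytic route via the convergence of the zeta function of $R$; but I would write out the geometry-of-numbers argument above, with the closedness of the orbit as the one point where nontrivial reduction theory is unavoidable.
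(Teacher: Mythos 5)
The paper does not actually prove this statement: it is quoted verbatim from Kleinert's survey (the result is K\"ate Hey's), so there is no in-house proof to compare against, and I am judging your argument on its own terms. Your setup is correct and well chosen: $\Units(R)$ is precisely the stabilizer of the lattice $R$ in $S$, the covolume of $sR$ is constant on $S$, and the lower bound $\norm{sr}\geq C^{-1}$ for $0\neq r\in R$ --- via $\abs{\det L_{sr}}=\abs{\det L_s}\cdot\abs{\det L_r}$ with $\det L_r$ a nonzero integer because $r$ is invertible in $D$ --- is exactly where the division-algebra hypothesis must enter. The weakness is that all the remaining content is then delegated to the final step. Citing Godement's criterion via Borel--Harish-Chandra for the ($\rats$-anisotropic) norm-one group of $D$ is legitimate, but that criterion directly asserts the compactness of $S/\Units(R)$, so once you invoke it the entire Mahler computation is decorative; and if you are not allowed to invoke it, the closedness of the orbit $\{sR\}$ remains a genuine gap --- you have not excluded that a limit of the lattices $s_kR$ fails to have the form $sR$. (A side remark: the arithmetic input the elementary argument really needs is not finiteness of the class number but only the finiteness of the number of sublattices of $R$ of index at most a fixed bound, which is elementary.)

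In fact your estimates are one short step away from a self-contained proof needing neither Mahler's criterion nor reduction theory; in your notation ($A=\reals\otimes_{\rats}D$, $n=\dim_{\rats}D$, $\abs{\det L_x}\leq C^n\norm{x}^n$), it runs as follows. Given $s\in S$, apply Minkowski's theorem to the lattice $sR$, whose covolume equals $\operatorname{covol}(R)$: there is $0\neq r\in R$ with $\norm{sr}\leq c_1$, where $c_1$ depends only on $R$ and $n$. Then $1\leq\abs{\det L_r}=\abs{\det L_{sr}}\leq C^nc_1^n=:B$. Since left and right multiplication by $r$ have the same determinant on the simple algebra $D$ (extend scalars to a splitting field), the left ideal $Rr$ is a sublattice of $R$ of index at most $B$; there are only finitely many such sublattices, so $Rr=Rr_j$ for one of finitely many fixed $r_1,\dotsc,r_h\in R$. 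Two generators of the same principal left ideal differ by a unit (again because $r$ is invertible in $D$), so $r=ur_j$ with $u\in\Units(R)$, and therefore
\[
  su=(sr)r_j^{-1}\in K:=\bigl(\{x\in A:\norm{x}\leq c_1\}\cdot\{r_1^{-1},\dotsc,r_h^{-1}\}\bigr)\cap S,
\]
which is a compact subset of $S$ (note $\det L_{su}=\pm1$, so $su\in S$). Hence $S=K\cdot\Units(R)$, and $S/\Units(R)$ is the image of the compact set $K$ under the continuous quotient map, hence compact. This closes your gap and converts your relative-compactness computation into the whole proof.
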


  From this, we can derive the following result
  (probably well known):

  \begin{lemm}\label{l:genquats}
    Let $D$ be a finite-dimensional division algebra over $\rats$
    and $R$ an order of $D$.
    Then $\card{\Units(R)}< \infty$ if and only if\/
    $\reals \otimes_{\rats} D$ is a division ring.
  \end{lemm}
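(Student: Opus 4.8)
The plan is to set $A := \reals\otimes_{\rats} D$, which is a finite-dimensional semisimple $\reals$-algebra (as we are in characteristic zero), and to prove the equivalence in two stages: first, that $\Units(R)$ is finite if and only if the group $S$ from \cref{t:kaetehey} is compact; and second, that $S$ is compact if and only if $A$ is a division ring. Recall that by the theorem of Frobenius a finite-dimensional division algebra over $\reals$ is isomorphic to $\reals$, $\compl$, or $\quats$, so $A$ is a division ring precisely in these three cases. Throughout, $\det$ denotes the determinant of the left-regular action on $A$, as in \cref{t:kaetehey}.

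For the first stage I would begin by noting $\Units(R)\subseteq S$: since $R$ is a full $\ints$-lattice in $A$, left multiplication by any $u\in R$ preserves $R$ and hence has $\det u\in\ints$; applying this to $u$ and $u^{-1}$ and using $\det(u)\det(u^{-1})=\det(\id)=1$ forces $(\det u)^2=1$. Because $R$ is a lattice, $\Units(R)=R\cap A^{\times}$ is a discrete, hence closed, subgroup of $S$. Now \cref{t:kaetehey} gives that $S/\Units(R)$ is compact. If $S$ is compact, then the closed discrete subgroup $\Units(R)$ is compact and discrete, hence finite. Conversely, if $\Units(R)$ is finite, choose a compact $C\subseteq S$ mapping onto the compact quotient $S/\Units(R)$; then $S=\Units(R)\cdot C$ is a finite union of compact sets and so is compact. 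This yields the first equivalence.

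For the second stage, suppose first that $A$ is a division ring, so $A\iso\reals$, $\compl$, or $\quats$. In each case the operator determinant is, up to sign, a positive power of the Euclidean norm, namely $\det d=\pm\abs{d}^{\dim_{\reals}A}$; hence $(\det d)^2=1$ forces $\abs d=1$, so $S$ is a Euclidean sphere and therefore compact. If instead $A$ is not a division ring, then, being semisimple, it contains a nontrivial idempotent $e$ (so $e\neq 0$, $e\neq 1$, $e^2=e$), giving a decomposition $A=eA\oplus(1-e)A$ into nonzero left ideals of real dimensions $d_1:=\dim_{\reals}eA$ and $d_2:=\dim_{\reals}(1-e)A$. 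Consider the units
\[
   u_t := t^{d_2}\,e + t^{-d_1}(1-e), \qquad t>0,
\]
whose inverses are $t^{-d_2}e+t^{d_1}(1-e)$. Left multiplication by $u_t$ acts as the scalar $t^{d_2}$ on $eA$ and as $t^{-d_1}$ on $(1-e)A$, so $\det u_t=(t^{d_2})^{d_1}(t^{-d_1})^{d_2}=1$ and thus $u_t\in S$ for all $t$; but $\{u_t\}$ is unbounded as $t\to\infty$, so $S$ is non-compact. Combining the two equivalences proves the lemma.

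The conceptual work is done entirely by Hey's theorem; the points requiring care are the topological bookkeeping in the first stage (that a closed discrete subgroup with compact quotient is finite exactly when the ambient group is compact) and, in the division-ring case, correctly identifying the operator determinant $\det$ with a power of the reduced norm so that the exponents $d_1,d_2$ in $u_t$ genuinely produce $\det u_t=1$. These are the steps I would expect to take the most attention.
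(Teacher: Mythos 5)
Your proof is correct and follows essentially the same route as the paper's: Frobenius's theorem plus compactness of the norm-one sphere in $\reals$, $\compl$, or $\quats$ for one direction, the nontrivial-idempotent construction showing $S$ is unbounded for the other, with Hey's theorem and the discreteness of $\Units(R)$ linking compactness of $S$ to finiteness of $\Units(R)$. (One harmless slip: $\Units(R)$ is not $R\cap A^{\times}$ in general --- e.g.\ $2\in\ints\cap\rats^{\times}$ is not a unit of $\ints$ --- but the discreteness you need follows simply from $\Units(R)\subseteq R$, which is all your argument actually uses.)
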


  \begin{proof}
    Suppose $D_{\reals}:=\reals \otimes_{\rats} D$ is a division ring.
    By Frobenius's theorem~\cite[Theorem~3.20]{FarbDennis93}, we have
    $D_{\reals} \iso \reals$, $\compl$, or $\quats$.
    In each case, one checks that the set $S$
    defined in \cref{t:kaetehey} is compact.
    Thus the discrete group $\Units(R)\subseteq S$ must be finite.
    (Notice that we did not use
    \cref{t:kaetehey} here---only
    that $\Units(R)\subseteq S$.)
    
    Conversely, suppose that $D_{\reals}$ is not a division ring.
    Then there is some non-trivial idempotent 
    $e\in D_{\reals}$,
    that is, $e^2=e$, but $e \neq 0$, $1$.
    (This follows since $D_{\reals}$ is semisimple.)
    Set $f=1-e$.
    Then for $\lambda$, $\mu\in \reals$,
    we have $\det(\lambda e + \mu f) = \lambda^{k_1}\mu^{k_2}$
    with $k_1 = \dim (D_{\reals}e)$
    and $k_2 = \dim (D_{\reals}f)$.
    In particular, for every 
    $\lambda \neq 0$ there is some $\mu$ such that
    $\lambda e + \mu f \in S$.
    This means that $S$ is unbounded, and thus not compact.
    It follows from \cref{t:kaetehey}
    that $\Units(R)$ can not be finite.
  \end{proof}

  \begin{proof}[Proof of \cref{t:finitecrit}]
    First, assume that we are given a decomposition
    $V = U_1 \oplus \dotsb \oplus U_r$ as in the theorem.
    Then $S_i := U_i \cap \rats\Lambda$ contains a basis
    of $U_i$ and thus is non-zero
    and necessarily simple as a $\rats G$-module.
    Thus
    \begin{align*}
      W = V\cap \rats\Lambda
        &= S_1 
           \oplus \dotsb \oplus
           S_r 
    \end{align*}
    is a decomposition of $W$ into simple
    $\rats G$-modules, which are pairwise 
    non-isomorphic.
    It follows that
    \[ \enmo_{\rats}(W) \iso
       D_1 \times \dotsm \times D_r,
    \]
    where $D_i = \enmo_{\rats G}(S_i)$.
    Since $\reals \otimes_{\rats} D_i 
    \iso \enmo_{\reals G}(U_i)$ is a division ring, too,
    it follows that the orders of each $D_i$ have a finite 
    unit group, by \cref{l:genquats}.
    Thus $\C_{\GL(\Lambda)}(G)$ is finite.
    
    Conversely, assume that $\N_{\GL(\Lambda)}(G)$ is finite.
    It follows from \cref{c:multfree}
    that $m_i=1$ for all $i$
    (in the notation introduced before \cref{l:endo_decom}).
    Thus $W$ has a decomposition into simple summands
    which are pairwise non-isomorphic:
    \[ W = S_1 \oplus \dotsb \oplus S_r.
    \]
    Let $D_i = \enmo_{\reals G}(S_i)$.
    Then \cref{l:genquats} yields that
    $\reals \otimes_{\rats} D_i $ is a division ring, too.
    Since 
    $\reals \otimes_{\rats} D_i \iso \enmo_{\reals G}(\reals S_i)$,
    it follows that $U_i := \reals S_i$ is simple.
    (Otherwise, the projection to a nontrivial invariant
     submodule would be a zero-divisor in
     $\enmo_{\reals G}(U_i)$.)
    For $i\neq j$, we have $U_i \not\iso U_j$
    by the Noether-Deuring theorem~\cite[Theorem~19.25]{lamFC}.
    Thus $V$ has a decomposition 
    $V = U_1 \oplus \dotsb \oplus U_r$ as required.
  \end{proof}
  
  \begin{remark}
      Let $z\in V$ be an element such that
      the orbit $Gz$ linearly spans $V.$
      Then the normalizer equivalence class of $z$ contains 
      infinitely many translation equivalence classes
      if (and only if)
      $\N_{\GL(\Lambda)}(G)$ has infinite order.
  \end{remark}

  \begin{proof} 
      The ``only if'' part is clear,
      so assume that $\N_{\GL(\Lambda)}(G)$ has infinite order.
      By \cref{rem:nrmtrans}, it suffices to show that
      the orbit $\N_{\GL(\Lambda)}(G)z$ has infinite size.
      By \cref{l:normcentfin}, 
      the centralizer $\C_{\GL(\Lambda)}(G)$ has also infinite order.
      If $cz=z$ for $c\in \C_{\GL(\Lambda)}(G)$,
      then $cgz = gcz = gz$ for all $g\in G$ and thus
      $c=1$.
      Thus
      \[ 
         \infty 
         =\card{\C_{\GL(\Lambda)}(G)}
         = \card{\C_{\GL(\Lambda)}(G)z}
         \leq \card{\N_{\GL(\Lambda)}(G)z}. %\qedhere
      \]    
  \end{proof}
    
    So when $\N_{\GL(\Lambda)}(G)$ is infinite,
    only elements contained in proper invariant subspaces can have
    finite orbits under the normalizer.
    (Notice that the linear span of an orbit $Gz$ is always 
    a $G$-invariant subspace  of $V$.)
    If $G$ is a transitive permutation group acting
    on the coordinates, then there are always points $z$
    such that the orbit $Gz$ spans the ambient 
    space---for example, $z=(1,0,\dotsc,0)^t$.

    When $V$ has 
    an irrational invariant subspace, then 
    $\N_{\GL(\Lambda)}(G)$ is infinite, by \cref{t:finitecrit}.
    Thus if $z$ is a core point for $G$ such that its orbit spans 
    the ambient space, then there are infinitely 
    many core points, even up to translation.
    This was first proved by 
    Rehn~\cite{rehn13diss,herrrehnschue15}
    for permutation groups.
    
    Another consequence of \cref{t:finitecrit} and the remark above
    is that there are infinitely many core points
    for transitive permutations groups $G$ acting on $V=\reals^d$
    such that $V$ is not multiplicity-free (as an $\reals G$-module).

  \begin{example}
    Consider the regular representation of a group $G$, 
    that is, $G$ acts on $\rats G$ by left multiplication,
    so it permutes the canonical basis $G$.
    As a lattice, we choose the group ring
    $\ints G$, the vectors with integer coordinates.
    Then $\enmo_{\ints G}(\ints G) \iso \ints G$.
    Units of group rings are a much studied problem.
    A theorem of Higman says that 
    $\Units(\ints G)$ is finite if and only if
    $G$ is abelian of exponent $1$, $2$, $3$, $4$ or $6$,
    or $G\iso Q_8\times E$ with $E^2 = \{1\}$.
    This can also be derived from 
    \cref{t:finitecrit}.
    
    In \cref{ex:cycgen}, 
    we described some core points in the cases 
    $G=C_4$ and $C_8$.
    In the case of $C_8$, the decomposition 
    of $\rats C_8$ into simple modules is given by
    \[ \rats C_8 
       \iso \rats \oplus \rats \oplus \rats[i] 
        \oplus \rats[ e^{2\pi i/8}].
    \]
    Over $\reals$, the last summand decomposes into 
    two invariant, irrational subspaces of dimension~$2$.
    The normalizer of $C_8$ is infinite because of this last
    summand. 
    Of course, any $z$ contained in the sum of the first 
    three summands has only a finite orbit under the normalizer,
    for example $z=(1,0,0,0,1,0,0,0)^t$.
    
    When $p$ is prime and $p\geq 5$, then 
    $\Units(\ints C_p)$ is infinite,
    but there are only finitely many core points up to 
    normalizer equivalence
    in $\ints C_p$, by \cref{t:QI_finitelycorep} below.
  \end{example}

\section{Rationally irreducible}
\label{sec:qi-groups}
Suppose that $\Lambda= \ints^d $, and assume that 
$G$ acts on $\reals^d$ by matrices in $\GL(d,\ints)$.
A subspace $U \leq \reals^d$ is called 
  \defemph{irrational} if 
  $U\cap \rats^d = \{0\}$ and \defemph{rational} if
  $U$ has a basis contained in $\rats^d$.
  If $U$ is an irreducible $\reals G$-submodule, 
  then $U$ is either rational or irrational.

In this section, we consider permutation groups
acting on $\reals^d$ by permuting coordinates.
(We conjecture that a version of the main result remains true  
more generally for finite matrix groups $G\leq \GL(d,\ints)$,
but we are not able to prove it yet.
One problem is that we can not extend
\cref{l:decomp_ratirr} below  to 
 this more general setting.)
 
Since permutation matrices are orthogonal, 
it follows that the orthogonal complement $U^{\perp}$
of any $G$-invariant subspace is itself $G$-invariant.
Following Dixon~\cite{dixon05}, 
we call a transitive permutation group $G$ a \defemph{QI-group},
when $\Fix(G)^{\perp}$ does not contain any rational 
$G$-invariant subspace other than $\{0\}$ and\/
  $\Fix(G)^{\perp}$ itself.
Notice that $\Fix(G)^{\perp}$ contains no non-trivial rational 
invariant subspaces
if and only if $\Fix(G)^{\perp}\cap \rats^d$ 
contains no proper $G$-invariant subspace 
other than $\{0\}$.
In algebraic language, this means that
$\Fix(G)^{\perp} \cap \rats^d$ 
is a simple module over $\rats G$.  

Let us emphasize that by definition, QI-groups are transitive.
Thus the fixed space $\Fix(G)$ is generated by the 
all ones vector $(1,1,\dotsc,1)^t$,  
and so $\dim \Fix(G)= 1$.

\begin{thm}\label{t:QI_finitelycorep}
  Let $G \leq \Sym{d}$ be a QI-group.
  Then there is a constant $M$ depending only on the group $G$
    such that every core point is normalizer equivalent
    to a core point $w$ with $\norm{w}^2\leq M$.
  In particular, there are only finitely many core points for $G$ 
  up to normalizer equivalence.
\end{thm}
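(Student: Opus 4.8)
The plan is to normalize the fixed-space part of the core point by integral translations and $-I$, then to use the centralizer to put its $\Fix(G)^{\perp}$-part into a ``reduced'' position, and finally to invoke the core-point property to bound that reduced part. Since $G$ is transitive, $\Fix(G)=\reals\allone$ and each $z\in\ints^d$ splits as $z=(k/d)\allone+z'$ with $k=z^{t}\allone$ and $z'=z-e_1z\in\Fix(G)^{\perp}$. Translating by $\allone\in\Fix_{\Lambda}(G)$ shifts $k$ by $d$ and $-I\in\N_{\GL(\Lambda)}(G)$ sends $k$ to $-k$, so up to normalizer equivalence I may assume $0\le k\le\lfloor d/2\rfloor$, contributing at most $d/4$ to $\norm{z}^{2}$. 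Every further transformation will fix $\allone$ and hence preserve $k$, so it remains to bound $\norm{z'}^{2}$ by a constant depending only on $G$.

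Set $S_2=\Fix(G)^{\perp}\cap\rats^{d}$; by the QI-hypothesis this is a simple $\rats G$-module, so $D_2:=\enmo_{\rats G}(S_2)$ is a division $\rats$-algebra, and because $S_2$ occurs with multiplicity one, the centralizer elements fixing $\allone$ act on $\Fix(G)^{\perp}=\reals\otimes_{\rats}S_2$ through (a finite-index subgroup of) the unit group $\Units(R_2)$ of an order $R_2$ in $D_2$, as in \cref{sec:orders}. Let $\Fix(G)^{\perp}=U_1\oplus\dotsb\oplus U_s$ be a decomposition into irreducible $\reals G$-subspaces. The decisive consequence of simplicity is that any nonzero rational $z'\in S_2$ has nonzero projection onto each $U_j$, and moreover $Gz'$ spans $\Fix(G)^{\perp}$ over $\reals$: indeed $\mathrm{span}_{\rats}(Gz')$ is a nonzero $\rats G$-submodule of the simple module $S_2$, hence equals $S_2$, a rational basis of which is a real basis of $\Fix(G)^{\perp}$.

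Next I would develop reduction theory for $\Units(R_2)$ acting on $\Fix(G)^{\perp}$. By \cref{t:kaetehey}, $\Units(R_2)$ is cocompact in the reduced-norm-one subgroup of $(\reals\otimes_{\rats}D_2)^{\times}$. Taking logarithms of the component sizes in the simple factors of $\reals\otimes_{\rats}D_2$, cocompactness says the image of $\Units(R_2)$ fills the trace-zero hyperplane, so for any nonzero $z'$ there is $u\in\Units(R_2)$ with $uz'$ \emph{reduced}: its projections to the $U_j$ are pairwise comparable in size and, within each isotypic block, in general position, with all implied constants depending only on $G$. Replacing $z'$ by $uz'$ replaces $z$ by a normalizer-equivalent core point (\cref{l:equiv_core_pts}). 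When $D_2$ is commutative---in particular when $d$ is prime, where $\reals\otimes_{\rats}D_2$ is a product of copies of $\compl$ and the $U_j$ are pairwise non-isomorphic---this is exactly Dirichlet-style balancing of the archimedean absolute values; the noncommutative case needs the Siegel reduction theory of $\GL$ over the local factors, and this is the step I expect to require the most work.

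It then suffices to show that a core point $w$ whose $\Fix(G)^{\perp}$-part $w'$ is reduced satisfies $\norm{w'}^{2}\le M'(G)$. Suppose not; then $\rho:=\min_j\norm{(w')_j}$ is, by reducedness, comparable to $\norm{w'}$ and hence large. The orbit polytope $\orbpol(G,w)=e_1w+\conv(Gw')$ lies in the layer $\{x:x^{t}\allone=k\}$, and its inradius about the barycenter $e_1w$ equals $\min_{\norm{\xi}=1}\max_{g\in G}\langle g\xi,w'\rangle\ge\min_{\norm{\xi}=1}(\xi^{t}M\xi)^{1/2}$, where $M=\tfrac{1}{\card{G}}\sum_{g}g^{-1}w'(w')^{t}g$ is a $G$-invariant operator, positive-definite because $Gw'$ spans $\Fix(G)^{\perp}$. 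Reducedness forces the smallest eigenvalue of $M$ to be of order $\rho^{2}$, so the inradius grows like $\rho$. Once it exceeds the covering radius of the layer lattice $\ints^{d}\cap\{x^{t}\allone=k\}$ (a constant depending only on $G$), the inscribed ball about $e_1w$ contains an integral point of the layer lying within bounded distance of $e_1w$, whereas every vertex $gw$ is at distance $\norm{w'}\ge\rho$; this integral non-vertex point contradicts $w$ being a core point. Hence $\norm{w'}^{2}\le M'(G)$ and $\norm{w}^{2}\le d/4+M'(G)=:M$. The main obstacle is to make this inradius estimate---equivalently, the lower bound on the smallest eigenvalue of $M$---uniform in the reduced representative, which in the noncommutative case is precisely what the general reduction theory of the previous paragraph must supply.
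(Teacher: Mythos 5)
Your overall architecture coincides with the paper's: normalize the fixed-space component by translations and $-I$, use units of the endomorphism order to balance the projections onto the irreducible real subspaces of $\Fix(G)^{\perp}$ (via the cocompactness in \cref{t:kaetehey}, i.e.\ a Dirichlet-type argument), and then use the core-point property to bound one projection and hence all of them. The one genuinely different ingredient is your last step: where the paper simply invokes \cref{t:projectionbounded} (Herr--Rehn--Schürmann) to get one small projection, you re-derive it by an inradius/covering-radius argument. That argument is sound, and your worry about the smallest eigenvalue of $M=\frac{1}{\card{G}}\sum_g g^{-1}w'(w')^tg$ is unfounded: since $G$ acts orthogonally, transposition restricts to an involution of the division algebra $\enmo_{\reals G}(U_j)$ whose symmetric elements are the real scalars, so $M|_{U_j}$ is the scalar $\norm{w'_j}^2/\dim U_j$ and reducedness immediately gives the eigenvalue lower bound of order $\rho^2$.

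The genuine gap is the one you flag yourself: you leave open the case where $D_2=\enmo_{\rats G}(\Fix(G)^{\perp}\cap\rats^d)$ is noncommutative, for which "Dirichlet-style balancing" is not available and you gesture at Siegel reduction. This case in fact never occurs, but you need to prove that, and your proposal does not. The point (which the paper extracts from Dixon's \cref{l:decomp_ratirr} and makes explicit in \cref{l:endohom}) is that the permutation module is a \emph{rational} representation in which each irreducible constituent $\chi$ of $\pi-1$ appears with multiplicity one; since the Schur index of $\chi$ over $\rats$ divides this multiplicity, it equals $1$, and $D_2\iso\rats(\chi)$ is an abelian number field. This identification is also what guarantees the structure your balancing step silently uses: the real irreducible subspaces of $\Fix(G)^{\perp}$ are exactly the Galois conjugates $W^{\alpha}$, $\alpha\in\Gal((\rats(\chi)\cap\reals)/\rats)$, a unit $c$ rescales $\norm{\cdot}^2$ on $W^{\alpha}$ by $\abs{\lambda(c)^{\alpha}}^2$, and $\lambda(\C_{\GL(d,\ints)}(G))$ has finite index in $\Units(O_{\rats(\chi)})$ (via \cref{l:suborder}), so that the log-image really is a full lattice in the trace-zero hyperplane (\cref{l:full_latt}). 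With that supplement your proof closes; without it, the central reduction step is unsupported.
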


We divide the proof of \cref{t:QI_finitelycorep}
into a number of lemmas.
The idea is the following: 
We show that for any vector $z\in \ints^d$ there is 
some $c\in \C_{\GL(d,\ints)}(G)$ such that the projections
of $cz$ to the different irreducible real subspaces of 
$\Fix(G)^{\perp}$
have approximately the same norm.
(At the same time, this point $cz$ is one with minimal
norm in the orbit $\C_{\GL(\Lambda)}(G)z$.)
When $z$ is a core point,
at least one of these norms must be ``small''
by a fundamental result of Herr, Rehn, and 
Schürmann~\cite[Theorem~9]{herrrehnschue15}
(\cref{t:projectionbounded} below).

We begin with a short reminder of some character theory.
The facts we need can be found in any basic text on
representations of finite groups,
for example Serre's text~\cite{SerreLRFG}.
Saying that a group $G$ acts linearly on a 
(finite-dimensional) vector space~$V$ over some field~$K$
is equivalent to having a representation
$R\colon G\to \GL(V)$
(or even $R\colon G\to \GL(d,K)$ when $V=K^d $).
The \emph{character} $\chi$ of $R$ (or $V$) is the function
defined by $\chi(g) = \tr (R(g))$.
An \emph{irreducible character} is the trace of an irreducible
representation $R\colon G\to \GL(d,\compl)$ over 
the field of complex numbers~$\compl$.
The set of irreducible characters of
the group $G$ (over the complex numbers)
is denoted by $\Irr(G)$.
For finite groups $G$, this is a finite set.
Indeed, by the orthogonality relations,
the set $\Irr(G)$ is orthonormal with respect
to a certain inner product on the space of all
functions $G\to \compl$~\cite[Section~2.3, Thm.~3]{SerreLRFG}.

Every character of a finite group can be written uniquely as a 
nonnegative integer linear combination of irreducible 
characters.
This corresponds to the fact that for 
each representation $G\to \GL(V)$ on some vector space $V$ over 
$\compl$, we can write $V$
as a direct sum of irreducible, $G$-invariant 
subspaces \cite[\S 1.4, Thm.~2, \S 2.3, Thm.~4]{SerreLRFG}.

Suppose $\chi$ is the character of some representation
$R$ of the finite group $G$.
Then the eigenvalues of $R(g)$, where $g\in G$, must be
$\card{G}$th roots of unity.
Thus the values of $\chi$ are contained in the field
generated by the $\card{G}$th roots of unity.
We write $\rats(\chi)$ for the field generated by all values 
of $\chi$.
It follows that $\rats(\chi)$ is a finite Galois extension 
of $\rats$,
with abelian Galois group $\Gal(\rats(\chi)/\rats)$.

The following lemma appears in 
Dixon's paper~\cite[Lemma~6(b)]{dixon05}.

\begin{lemm}[Dixon~\cite{dixon05}]\label{l:decomp_ratirr}
  Let $G$ be a QI-group and let
  $\pi$ be the character of the corresponding permutation
  representation of $G$.
  Let $\chi\in \Irr G$ be an irreducible constituent of
  $\pi - 1$ (the character of $G$ on $\Fix(G)^{\perp}$).
  Then 
  \[ \pi = 1 + \sum_{ \alpha \in \Gamma } \chi^{\alpha},
     \quad\text{where}\quad 
     \Gamma = \Gal(\rats(\chi)/\rats).
  \]
\end{lemm}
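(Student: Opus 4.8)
The plan is to read off the complex constituents of $\pi$ from two inputs: that the permutation character is rational-valued, and that the QI-hypothesis forces $S:=\Fix(G)^{\perp}\cap\rats^d$ to be a simple $\rats G$-module. Since $\pi(g)$ is the number of points fixed by $g$, the character $\pi$ takes integer values and is in particular fixed by every field automorphism $\alpha$ over $\rats$; hence $\langle\pi,\chi^{\alpha}\rangle=\langle\pi,\chi\rangle$, so every Galois conjugate of $\chi$ occurs in $\pi$ with the same multiplicity. Transitivity gives $\langle\pi,1\rangle=1$, so $1$ is not a constituent of $\pi-1$, and thus $\chi\neq 1$.

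First I would apply the Wedderburn theory of $\rats G$ together with the theory of the Schur index \cite{lamFC}. Because $S$, whose character is $\pi-1$, is simple over $\rats$, its complexification satisfies $\compl\otimes_{\rats} S\iso m_0\bigoplus_{\alpha\in\Gamma}V_{\chi^{\alpha}}$, where $V_{\psi}$ denotes the complex irreducible affording $\psi$, $\Gamma=\Gal(\rats(\chi)/\rats)$, and $m_0=m_{\rats}(\chi)$ is the Schur index of $\chi$ over $\rats$; in particular the complex constituents of $\pi-1$ are exactly the single Galois orbit of $\chi$, each with multiplicity $m_0$. Comparing with the preceding paragraph yields
\[ \pi = 1 + m_0\sum_{\alpha\in\Gamma}\chi^{\alpha}, \]
so the lemma is equivalent to the statement that the Schur index $m_0$ equals $1$, that is, that $\pi-1$ is multiplicity-free over $\compl$.

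The main obstacle is exactly this equality $m_0=1$. My approach would exploit that a permutation representation is orthogonal: the standard inner product on $\reals^d$, for which the coordinate vectors are orthonormal, restricts to a $G$-invariant, positive-definite, symmetric $\rats$-bilinear form $B$ on $S$. Taking $B$-adjoints equips the division algebra $D:=\enmo_{\rats G}(S)$ with an involution that is positive (as $B$ is positive-definite) and of orthogonal type (as $B$ is symmetric rather than alternating). Since $\dim_{\rats}D=m_0^{2}\,[\rats(\chi):\rats]$, proving $m_0=1$ is the same as showing that $D$ coincides with its centre $\rats(\chi)$. I would try to deduce this from Albert's classification of division algebras admitting a positive involution: as $\rats(\chi)$ is abelian over $\rats$, hence totally real or a CM-field, the non-commutative possibilities are quaternion algebras (first kind) or algebras of the second kind over a CM-field, and these should be incompatible with $B$ being symmetric and with $\pi-1$ being the character of $\Fix(G)^{\perp}$, which is realized by orthogonal permutation matrices. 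Carrying out this exclusion is where the genuine difficulty lies; should a clean self-contained argument prove elusive, one can instead appeal to Dixon's original treatment \cite{dixon05}. In either case the conclusion is that over $\reals$ the space $\Fix(G)^{\perp}$ is a multiplicity-free sum indexed by $\Gamma$, which is precisely what the remainder of the proof of \cref{t:QI_finitelycorep} requires.
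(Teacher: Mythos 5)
The paper itself gives no proof of this lemma: it is imported verbatim from Dixon~\cite{dixon05} (Lemma~6(b) there), so there is no internal argument to compare yours against. On its own terms, the first half of your proposal is correct and identifies the content precisely: rationality of $\pi$, transitivity ($\langle\pi,1\rangle=1$), and the QI hypothesis (simplicity of $S=\Fix(G)^{\perp}\cap\rats^d$ over $\rats G$) give $\pi=1+m_0\sum_{\alpha\in\Gamma}\chi^{\alpha}$ with $m_0=m_{\rats}(\chi)$ the Schur index, so the lemma is exactly the assertion $m_{\rats}(\chi)=1$.

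The mechanism you propose for that last step does not work, and this is a genuine gap rather than a fixable technicality. \emph{Every} $\rats G$-module of a finite group carries a $G$-invariant positive-definite symmetric bilinear form (average any positive-definite form over $G$), so the existence of such a $B$ on $S$ carries no information whatsoever about $D=\enmo_{\rats G}(S)$; and Albert's classification of division algebras with positive involution explicitly \emph{includes} noncommutative cases (totally definite and totally indefinite quaternion algebras over totally real fields, and second-kind algebras over CM fields), so there is no exclusion to be had from positivity plus symmetry of $B$. A concrete counterexample to the method: the simple $4$-dimensional $\rats Q_8$-module inside the regular representation is realized by signed permutation matrices, hence carries an invariant symmetric positive-definite form, yet $\enmo_{\rats Q_8}(S)\iso\quats_{\rats}$ and the Schur index is $2$. (The regular representation of $Q_8$ is not QI, so this does not contradict the lemma --- but it shows that orthogonality of the representation cannot by itself force $m_0=1$.) Whatever argument closes this step must use the specific structure of a QI permutation module, not merely the existence of an invariant form; your own fallback at this point is to cite Dixon, which is exactly what the paper does, so the one step of the lemma that carries real mathematical content remains unproved in your proposal.
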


For the moment, we work with the complex space
$\compl^d$, on which $G$ acts by permuting coordinates.
Recall that to each $\chi\in \Irr G$ there corresponds a central primitive
idempotent of the group algebra $\compl G$, namely
\[  e_{\chi} = \frac{\chi(1)}{\card{G}} \sum_{g\in G} \chi(g^{-1})g
   \in \Z(\compl G).
\] 
If $V$ is any $\compl G$-module, then
$e_{\chi}$ acts on $V$ as the projection onto
its $\chi$-homogeneous component.
So the image $e_{\chi}(V)$ coincides
with the set $\{v\in V\mid e_{\chi}v=v \}$, 
and the character of $e_{\chi}(V)$ is an integer multiple
of $\chi$~\cite[\S 2.6]{SerreLRFG}. 
In the present situation, it follows
from \cref{l:decomp_ratirr} that
\[ U:= e_{\chi} (\compl^d )
     = \{ v\in \compl^d \mid 
          e_{\chi}v=v
       \}
\] 
is itself an irreducible module affording the character $\chi$.
The projection $e_{\chi}$ maps the standard basis of $\compl^d$ 
to vectors contained in
$K^d$, where $K:= \rats(\chi)$.
Thus $U$ has a basis contained in $K^d$.
(This means that the representation corresponding to the linear action
of $G$ on $U$ can be described by matrices 
with all entries in $K$.
Thus $\chi$ 
is the character of a representation where all matrices have
entries in $K=\rats(\chi)$.)

Another consequence of \cref{l:decomp_ratirr} is 
that we have the decomposition
\[ \compl^d = \Fix(G) \oplus 
                      \bigoplus_{\gamma\in \Gamma}
                             U^{\gamma}.
\]
Here $U^{\gamma}$ means this:
Since $U$ has a basis in $\rats(\chi)^d$,
we can apply $\gamma$ to the coordinates of the 
vectors in such a basis.
The linear span of the result is denoted by $U^{\gamma}$.
This is independent of the chosen basis.

\begin{lemm}\label{l:endohom}
  Set
  $ A:= \C_{\mat_d(\rats)}(G)
   = \{a\in \mat_d(\rats) \mid \forall g\in G \colon
            ag=ga\}$, the full centralizer of $G$
  in the ring of $d\times d$-matrices over $\rats$.
  There is an algebra homomorphism
  $\lambda\colon A \to \rats(\chi)$
  such that each $a\in A$ acts on $U^{\gamma}$
  by multiplication with $\lambda(a)^{\gamma}$,
  and such that $\lambda(a^t) = \overline{\lambda(a)}$.
  There is another homomorphism
  $m\colon A \to \rats$ such that
  \[ A \iso \rats \times \rats(\chi)
    \quad \text{via}\quad
    a \mapsto (m(a),\lambda(a)).     
  \]
\end{lemm}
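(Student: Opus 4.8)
The plan is to define the two homomorphisms directly from the action of $A$ on the irreducible pieces of $\compl^d$, and then to identify the image by a dimension count. Recall the decomposition $\compl^d = \Fix(G)\oplus\bigoplus_{\gamma\in\Gamma}U^\gamma$ established after \cref{l:decomp_ratirr}, in which $U=e_\chi(\compl^d)$ is irreducible with a basis contained in $K^d$, where $K=\rats(\chi)$. I would define $m\colon A\to\rats$ as the scalar by which $a$ acts on the fixed line $\Fix(G)$: since this line is spanned by the all ones vector $\allone\in\rats^d$ and $a$ has rational entries, $a\allone=m(a)\allone$ with $m(a)\in\rats$. For $\lambda$, note that every $a\in A$, regarded in $\mat_d(\compl)$, commutes with $G$ and hence with the central idempotent $e_\chi$, so $a$ preserves $U$; by Schur's lemma it acts there as a scalar $\lambda(a)\in\compl$. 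Both $m$ and $\lambda$ are plainly $\rats$-algebra homomorphisms.

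The first substantive step is to show $\lambda(a)\in K$. Choosing a nonzero $v\in U\cap K^d$, rationality of $a$ gives $av\in K^d$, while $av=\lambda(a)v$; comparing a nonzero coordinate shows $\lambda(a)\in K=\rats(\chi)$. The action on the conjugate pieces then follows because $a$, being rational, commutes with the coordinatewise action of $\Gamma$: applying $\gamma$ to $av=\lambda(a)v$ yields $a\,v^\gamma=\lambda(a)^\gamma v^\gamma$, and $U^\gamma$ is by definition spanned by such vectors $v^\gamma$, so $a$ acts on $U^\gamma$ as multiplication by $\lambda(a)^\gamma$.

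Next I would establish the identity $\lambda(a^t)=\overline{\lambda(a)}$. Equip $\compl^d$ with the standard Hermitian form $\langle u,v\rangle=\sum_i u_i\overline{v_i}$, for which permutation matrices are unitary; because $a$ is real, its Hermitian adjoint is exactly $a^t$, and transposing the relations $ag=ga$ (using $g^t=g^{-1}\in G$) shows $a^t\in A$ as well. Evaluating the adjoint relation on a single nonzero $u\in U$, where the form is positive definite, gives $\lambda(a)\langle u,u\rangle=\overline{\lambda(a^t)}\langle u,u\rangle$ and hence $\lambda(a^t)=\overline{\lambda(a)}$.

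It remains to see that $(m,\lambda)\colon A\to\rats\times\rats(\chi)$ is an isomorphism. It is injective, since $m(a)=0$ and $\lambda(a)=0$ force $a$ to vanish on $\Fix(G)$ and, by the preceding step, on every $U^\gamma$, hence on all of $\compl^d$. For surjectivity I would compare $\rats$-dimensions: base change preserves dimension, $A\otimes_\rats\compl\iso\enmo_{\compl G}(\compl^d)$, and the summands $\Fix(G)$ and $U^\gamma$ ($\gamma\in\Gamma$) are pairwise non-isomorphic irreducibles, since the characters $1$ and the conjugates $\chi^\gamma$ are distinct. Hence $\enmo_{\compl G}(\compl^d)\iso\compl^{1+\card{\Gamma}}$, so $\dim_\rats A=1+\card{\Gamma}=1+[\rats(\chi):\rats]=\dim_\rats(\rats\times\rats(\chi))$, and an injective $\rats$-linear map between spaces of equal finite dimension is bijective. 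The delicate points are the two constraints on $\lambda$, namely $\lambda(a)\in\rats(\chi)$ and $\lambda(a^t)=\overline{\lambda(a)}$: both rely on combining the rationality of the matrices in $A$ with the explicit $K$-rational structure of $U$ coming from \cref{l:decomp_ratirr}, and the conjugation identity in particular is what will later let one relate the action of $A$ to the norms appearing in the proof of \cref{t:QI_finitelycorep}.
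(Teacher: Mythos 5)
Your proof is correct, and for most of the lemma it follows the same route as the paper: $\lambda(a)$ is the scalar by which $a$ acts on the irreducible summand $U$, its membership in $\rats(\chi)$ and the formula $\lambda(a)^{\gamma}$ on $U^{\gamma}$ both come from combining the $K$-rational basis of $U$ with the rationality of $a$, and the identity $\lambda(a^t)=\overline{\lambda(a)}$ comes from the fact that the transpose is the Hermitian adjoint of a real matrix (the paper phrases this via an orthonormal basis of common eigenvectors, you via the single computation $\langle au,u\rangle=\langle u,a^tu\rangle$ on a nonzero $u\in U$; these are the same observation that the isotypic decomposition is orthogonal, and your remark that $a^t\in A$ because $g^t=g^{-1}\in G$ is a point worth making explicit). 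The one genuinely different step is surjectivity. The paper exhibits an explicit preimage $\phi(q,\mu)=qe_1+\sum_{\gamma\in\Gamma}(\mu e_{\chi})^{\gamma}\in\Z(\rats G)$, which has the side benefit of showing that every element of $A$ is realized by an element of the center of the rational group algebra, and which is directly computable. You instead count dimensions: $A\otimes_{\rats}\compl\iso\enmo_{\compl G}(\compl^d)\iso\compl^{1+\card{\Gamma}}$ because the constituents $1$ and $\chi^{\gamma}$ are pairwise distinct (for completeness one should note that $\chi\neq 1$ since $G$ is transitive, so $\pi-1$ has no trivial constituent, and that distinct $\gamma\in\Gal(\rats(\chi)/\rats)$ give distinct $\chi^{\gamma}$ because such a $\gamma$ is determined by its effect on the values of $\chi$); this matches $\dim_{\rats}(\rats\times\rats(\chi))=1+\card{\Gamma}$, so injectivity forces bijectivity. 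Your argument is shorter and perfectly valid; the paper's is more constructive.
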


The isomorphism $A\iso \rats \times \rats(\chi)$
appears in Dixon's paper~\cite[Lemma~6(d)]{dixon05}
and follows from \cref{l:decomp_ratirr}
together with general results in representation theory.
But as we need the specific properties of the map 
$\lambda$ from the lemma, we give a detailed proof.

\begin{proof}[Proof of \cref{l:endohom}]
  Suppose the matrix $a$ centralizes $G$,
  and let $\lambda(a)\in \compl$ be an eigenvalue
  of $a$ on $U$. 
  The corresponding eigenspace is $G$-invariant
  since $a$ centralizes $G$.
  Since $U$ is irreducible, $U$ is contained
  in the eigenspace of $\lambda(a)$. 

  When $a\in A \subseteq \mat_d(\rats)$, 
  then $a$ maps 
  $U \cap \rats(\chi)^d\neq \{0\}$ to itself, and thus
  $\lambda(a)\in \rats(\chi)$.
  This defines the algebra homomorphism
  $\lambda\colon A \to \rats(\chi)$.
  
  When $u\in U \cap \rats(\chi)^d$, $\gamma\in \Gamma$,
   and $a\in A$, then
  $a u^{\gamma} = (au)^{\gamma} = \lambda(a)^{\gamma}u^{\gamma}$.
  Thus $a$ acts as $\lambda(a)^{\gamma}$ on $U^{\gamma}$.

  Each $a\in A$ acts also on the one-dimensional 
  fixed space by multiplication with
  some $m(a)\in \rats$.
  As
  \[ \compl^d = \Fix(G) \oplus 
                        \bigoplus_{\gamma\in \Gamma}
                               U^{\gamma},
  \]
  we see that the space $\compl^d$ has a basis of
  common eigenvectors for all $a\in A$.
  With respect to this basis, each $a$ is a diagonal matrix,
  where $m(a)$ appears once and $\lambda(a)^{\gamma}$
  appears $\chi(1)$-times for each $\gamma\in \Gamma$.
  In particular,
  the map $A\ni a \mapsto (m(a), \lambda(a))$
  is injective.
  
  Since $G$ acts orthogonally with respect to the standard 
  inner product on $\compl^d$,
  the above decomposition into irreducible subspaces
  is orthogonal and we can find an orthonormal basis
  of common eigenvectors of all $a\in A$.
  From this, it is clear that
  $\lambda(a^t) = \lambda(a^{*}) = \overline{\lambda(a)}$.

  To see that $a\mapsto (m(a),\lambda(a))$ is onto,
  let $(q, \mu)\in \rats \times \rats(\chi)$.
  Define
  \begin{align*}
     \phi(q,\mu)
       &:= qe_1 +
           \sum_{\gamma\in \Gamma} (\mu e_{\chi})^{\gamma}
       \\
       &= q\frac{1}{\card{G}} \sum_{g\in G} g
          + \frac{\chi(1)}{\card{G}}
              \sum_{g\in G} 
              \left( \sum_{\gamma\in \Gamma}
                       \big( \mu \chi(g^{-1})
                       \big)^{\gamma}
              \right) g                       
      \\
      &\in \Z(\rats G).
  \end{align*}
  Then the corresponding map
  $v \mapsto \phi(q,\mu)v$ is in $A$,
  and from 
  $\phi(q,\mu)e_1 = qe_1$ and
  $\phi(q,\mu)e_{\chi} = \mu e_{\chi}$
  we see that $m(\phi(q,\mu))=q$ and
  $\lambda(\phi(q,\mu)) = \mu$.  
  This finishes the proof that $A\iso \rats \times \rats(\chi)$.
\end{proof}

\begin{lemm}\label{l:realdecomp}
  Set $W:= (U + \overline{U})\cap \reals^d$.
  Then the decomposition of\/ $\reals^d$ into irreducible
  $\reals G$-modules is given by
  \[ \reals^d = \Fix(G) 
                   \oplus
                   \bigoplus_{\alpha\in \Gamma_0}
                      W^{\alpha}
      , \quad
      \Gamma_0 = \Gal((\rats(\chi)\cap \reals)/\rats).
  \]
  (In particular, $W$ is irreducible as an $\reals G$-module.)
  For $w\in W^{\alpha}$ and $a\in A$, we have
  $\norm{aw}^2 
   = \left( \overline{\lambda(a)} \lambda(a) 
     \right)^{\alpha} 
     \norm{w}^2$.    
\end{lemm}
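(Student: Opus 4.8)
The plan is to obtain the real decomposition from the complex one $\compl^d = \Fix(G) \oplus \bigoplus_{\gamma\in\Gamma} U^\gamma$ by grouping the complex summands into orbits under complex conjugation and then passing to real points. Write $K := \rats(\chi)$ and $K_0 := K \cap \reals$, and let $\sigma \in \Gamma = \Gal(K/\rats)$ be the restriction of complex conjugation (it lies in $\Gamma$ since $K/\rats$ is Galois, and is trivial exactly when $\chi$ is real-valued). Because $U$ has a basis in $K^d$ and each $\gamma$ commutes with complex conjugation on $K$ (as $\Gamma$ is abelian), one checks $\overline{U^\gamma} = U^{\sigma\gamma}$; in particular $\overline U = U^\sigma$ and $W = (U + U^\sigma)\cap \reals^d$. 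Thus complex conjugation permutes the summands $U^\gamma$ by $\gamma \mapsto \sigma\gamma$, so the conjugation orbits are the cosets of $\erz{\sigma}$ in $\Gamma$, which are in bijection with $\Gamma_0 = \Gal(K_0/\rats)$ via restriction.

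For each $\alpha\in\Gamma_0$ I would choose a lift $\tilde\alpha\in\Gamma$ and set $X_\alpha := \bigoplus_{\gamma\in\tilde\alpha\erz{\sigma}} U^\gamma$, which equals $U^{\tilde\alpha}$ when $\sigma=1$ and $U^{\tilde\alpha}\oplus \overline{U^{\tilde\alpha}}$ otherwise; this depends only on $\alpha$. Each $X_\alpha$ is stable under complex conjugation, hence $W^\alpha := X_\alpha\cap\reals^d$ is a real form with complexification $X_\alpha$, and $W = W^1$. Since the complex decomposition is orthogonal for the standard Hermitian product (as noted in the proof of \cref{l:endohom}), taking real points distributes over the direct sum, giving the orthogonal decomposition $\reals^d = \Fix(G)\oplus\bigoplus_{\alpha\in\Gamma_0} W^\alpha$. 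To justify the notation $W^\alpha$ I would verify that $W$ is $K_0$-rational: taking a $K$-basis $u_1,\dots,u_n$ of $U$ (with $n=\chi(1)$), the vectors $u_j+\overline{u_j}$ are real with coordinates in $K_0$, and when $\sigma\neq 1$, choosing $\delta\in K$ with $\sigma(\delta)=-\delta$, the vectors $\delta^{-1}(u_j-\overline{u_j})$ are likewise real with coordinates in $K_0$ and together span $W$ (when $\sigma=1$ the $K$-basis of $U$ is already real and $K_0$-rational); then $W^\alpha$ is exactly the $\alpha$-twist of this basis, matching $(W^\alpha)_\compl = X_\alpha$.

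The main obstacle is the irreducibility of each $W^\alpha$ as an $\reals G$-module. When $\sigma\neq 1$ the character $\chi^{\tilde\alpha}$ is non-real, $U^{\tilde\alpha}\not\iso \overline{U^{\tilde\alpha}}$, and $W^\alpha$ is irreducible of complex type. When $\sigma=1$ the delicate point is to rule out the quaternionic type: a priori the complex-irreducible $U^{\tilde\alpha}$, though of real-valued character, could fail to be realizable over $\reals$, in which case $W^\alpha$ would split. Here I would invoke Frobenius–Schur: a quaternionic-type constituent occurs with even multiplicity in any representation defined over $\reals$, whereas by \cref{l:decomp_ratirr} the constituent $\chi$ (and each $\chi^{\tilde\alpha}$) occurs in the rational, hence real, permutation character $\pi$ with multiplicity $1$. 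This forces indicator $+1$ and real type, so $W^\alpha$ is irreducible.

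Finally, the norm formula follows from the action of $A$ together with orthogonality. For $w\in W^\alpha$ I would write $w = w_1 + \overline{w_1}$ with $w_1\in U^{\tilde\alpha}$ (so $\overline{w_1}\in U^{\sigma\tilde\alpha}$); orthogonality gives $\norm{w}^2 = \norm{w_1}^2 + \norm{\overline{w_1}}^2 = 2\norm{w_1}^2$ (and $w=w_1$ when $\sigma=1$). Since $a\in A$ acts on $U^{\tilde\alpha}$ by $\lambda(a)^{\tilde\alpha}$ and on $U^{\sigma\tilde\alpha}$ by $\overline{\lambda(a)^{\tilde\alpha}}=\lambda(a)^{\sigma\tilde\alpha}$, one gets $\norm{aw}^2 = \lvert \lambda(a)^{\tilde\alpha}\rvert^2\,\norm{w}^2 = (\overline{\lambda(a)}\lambda(a))^{\tilde\alpha}\norm{w}^2$. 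As $\overline{\lambda(a)}\lambda(a)$ is fixed by $\sigma$ it lies in $K_0$, so its $\tilde\alpha$-image coincides with its $\alpha$-image, yielding $\norm{aw}^2 = (\overline{\lambda(a)}\lambda(a))^\alpha\,\norm{w}^2$ as claimed.
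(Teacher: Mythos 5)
Your proof is correct and follows essentially the same route as the paper's (which is much terser): group the conjugate pairs $U^{\gamma}$, $\overline{U^{\gamma}}=U^{\kappa\gamma}$ in the complex decomposition coming from \cref{l:decomp_ratirr}, intersect with $\reals^d$, and read off the norm formula from \cref{l:endohom}. The additional details you supply --- the $K_0$-rational basis of $W$ justifying the notation $W^{\alpha}$, and the Frobenius--Schur argument ruling out quaternionic type (which is in any case automatic once one knows that $W$ is a real form of the irreducible module $U$, since a real module with irreducible complexification is irreducible) --- correctly fill in steps the paper dismisses as ``clear'' or ``immediate.''
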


\begin{proof}
  When $\rats(\chi)\subseteq \reals$, then
  $ \overline{U} = U$ and 
  $W = U\cap \reals^d$.
  The result is clear in this case.
  
  Otherwise, we have $ U\cap \reals^d =\{0\} $
  and $U\cap \overline{U} = \{0\}$,
  and so $W = (U \oplus \overline{U})\cap \reals^d\neq \{0\}$,
  and thus again $W$ is simple over $\reals G$.
  
  The extension $\rats(\chi)/\rats $ has an abelian Galois group,
  and thus $\rats(\chi)\cap \reals$ is also Galois over~$\rats$.
  The Galois group $\Gamma_0$ is isomorphic to the
  factor group $\Gamma / \{\id, \kappa\}$,
  where $\kappa$ denotes complex conjugation.
  Suppose $\alpha \in \Gamma_0$ is the restriction
  of $\gamma \in \Gamma$ to $\rats(\chi)\cap \reals$.
  Then
  \[ W^{\alpha} 
      = \left( (U+\overline{U})\cap \reals^d \right)^{\alpha}
      = (U^{\gamma} + \overline{U}^{\gamma}) \cap \reals^d
     = (U^{\gamma} + U^{\kappa\gamma})\cap \reals^d.
  \]
  The statement about the decomposition follows.
  
  The last statement is immediate from \cref{l:endohom}. 
\end{proof}

\begin{lemm}\label{l:full_latt}
  Let $C:= \C_{\GL(d,\ints)}(G)$, and define
  \[ L\colon C \to \reals^{\Gamma_0}
     , \quad
     L(c) := \big( \log (\overline{\lambda(c)} \lambda(c))^{\alpha}  
             \big)_{\alpha\in \Gamma_0}.
  \]
  Then the image $L(C)$ of $C$ under this map is a full lattice
  in the hyperplane
  \[ H = \bigg\{ (x_{\alpha})_{\alpha\in \Gamma_0}
             \mid \sum_{\alpha\in \Gamma_0} x_{\alpha} = 0
         \bigg\}.
  \]
\end{lemm}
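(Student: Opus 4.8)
The plan is to recognize $L$ as essentially the classical logarithmic embedding from Dirichlet's unit theorem applied to the number field $K := \rats(\chi)$, and then to transport the conclusion of that theorem to $C$. First I would recall from \cref{sec:orders} and \cref{l:endohom} that $C = \C_{\GL(d,\ints)}(G)$ is the unit group $\Units(R)$ of the order $R := \C_{\mat_d(\ints)}(G)$ in the commutative algebra $A \iso \rats \times K$, and that $\lambda\colon A \to K$ is the projection onto the second factor. Since $A$ is a product of fields, its unique maximal order is $\ints\times\mathcal{O}_K$, which contains $R$ with finite index; hence \cref{l:suborder} gives that $\Units(R)$ has finite index in $\Units(\ints\times\mathcal{O}_K) = \{\pm1\}\times\mathcal{O}_K^{\times}$. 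Projecting via $\lambda$, the group $\lambda(C)$ has finite index in $\mathcal{O}_K^{\times}$. Because $\lambda$ is an algebra homomorphism and $\overline{(\cdot)}$ agrees with the field automorphism $\kappa$ (complex conjugation) and is therefore multiplicative, the map $L = \ell\circ\lambda$ is a group homomorphism, where $\ell\colon K^{\times}\to\reals^{\Gamma_0}$ is $\ell(u) = (\log(\overline{u}u)^{\alpha})_{\alpha\in\Gamma_0}$. Thus it suffices to show that $\ell(\mathcal{O}_K^{\times})$ is a full lattice in $H$, as a finite-index subgroup of a full lattice is again full.

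Second, I would verify that the image lands in $H$. For a unit $u\in\mathcal{O}_K^{\times}$ the element $\overline{u}u = \kappa(u)u$ is fixed by $\kappa$, so it lies in the real subfield $K_0 := K\cap\reals$, and it is totally positive, so all logarithms are defined. Summing the coordinates gives $\sum_{\alpha}\log(\overline uu)^{\alpha} = \log N_{K_0/\rats}(\overline uu) = \log N_{K/\rats}(u) = 0$, using $N_{K/\rats}(u) = 1$ and positivity. Hence $\ell(\mathcal{O}_K^{\times})\subseteq H$, and discreteness will come for free once the identification with the Dirichlet lattice is made.

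Third, the heart of the matter is to identify $\ell$ with the logarithmic embedding indexed by the archimedean places of $K$. Since $K$ is abelian over $\rats$, it is either totally real or totally imaginary, and in both cases the archimedean places are naturally indexed by $\Gamma_0$. Fixing an embedding $K\hookrightarrow\compl$ so that $\kappa$ induces complex conjugation, for each $\alpha\in\Gamma_0$ with a lift $\tilde\alpha\in\Gamma$ one computes $(\overline uu)^{\alpha} = \lvert\sigma_\alpha(u)\rvert^2$, where $\sigma_\alpha$ is the corresponding real or complex embedding; hence $\ell(u)_\alpha = \log\lvert\sigma_\alpha(u)\rvert^2$, which is the standard logarithmic embedding (the extra factor $2$ in the totally real case is a uniform positive rescaling that preserves both $H$ and lattices). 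I would then invoke Dirichlet's unit theorem: the free rank of $\mathcal{O}_K^{\times}$ is $r_1 + r_2 - 1$, which equals $\lvert\Gamma_0\rvert - 1 = \dim H$ in the totally real case ($r_2 = 0$, $r_1 = \lvert\Gamma\rvert = \lvert\Gamma_0\rvert$) and in the totally imaginary case ($r_1 = 0$, $r_2 = \lvert\Gamma\rvert/2 = \lvert\Gamma_0\rvert$), and the theorem says precisely that $\ell(\mathcal{O}_K^{\times})$ is a full lattice in $H$.

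Finally, since $\lambda(C)$ is a finite-index subgroup of $\mathcal{O}_K^{\times}$ and $L(C) = \ell(\lambda(C))$, the image $L(C)$ is a finite-index subgroup of the full lattice $\ell(\mathcal{O}_K^{\times})$, hence itself a full lattice in $H$. The main obstacle I anticipate is bookkeeping rather than conceptual: matching the index set $\Gamma_0$ with the set of archimedean places of $K$, checking simultaneously in the real and imaginary cases that $\dim H = \lvert\Gamma_0\rvert - 1$ coincides with the Dirichlet rank, and confirming that $\ell$ is genuinely a positive rescaling of the Dirichlet embedding so that the ``full lattice'' conclusion transfers verbatim.
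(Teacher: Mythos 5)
Your proposal is correct and follows essentially the same route as the paper's proof: recognize that $L(c)=2\,l(\lambda(c))$ with $l$ the Dirichlet logarithmic embedding of \cref{l:diri_lattice}, show that $\lambda(C)$ has finite index in $\Units(O_K)$ by comparing the order $\C_{\mat_d(\ints)}(G)$ with the maximal order $\ints\times O_K$ via \cref{l:suborder}, and conclude that a finite-index subgroup of the full lattice $l(\Units(O_K))$ is again full in $H$. The only difference is one of exposition: you spell out the identification of $\Gamma_0$ with the archimedean places and the norm computation showing the image lies in $H$, which the paper handles in the discussion preceding its proof.
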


We will derive this lemma from the following 
version of Dirichlet's unit theorem~\cite[Satz~I.7.3]{Neukirch07_AZT}:

\begin{lemm}\label{l:diri_lattice}
  Let $K$ be a finite field extension over $\rats$,
  let $\alpha_1$, $\dots$, $\alpha_r \colon K\to \reals$
  be the different real field embeddings of $K$, 
  and let
  $\beta_1$, $\overline{\beta_1}$, $\dots$, 
  $\beta_s$, $\overline{\beta_s} \colon K\to \compl$
  be the different complex embeddings of $K$,
  whose image is not contained in $\reals$.
  Let $O_K$ be the ring of algebraic integers in $K$
  and $l\colon K^{*} \to \reals^{r+s}$ the map 
  \[ z \mapsto 
     l(z)=
     (\log \abs{ z^{\alpha_1} }, \dotsc,
      \log \abs{ z^{\alpha_r} },
      \log \abs{ z^{\beta_1} }, \dotsc,
      \log \abs{ z^{\beta_s} }
     ) \in \reals^{r+s}.
  \] 
  Then the image $l(\Units(O_K))$ 
  of the unit group of $O_K$ under $l$ is a full lattice in the
  hyperplane
  \[  H = 
      \bigg\{ x\in \reals^{r+s}
         \mid
         \sum_{i=1}^{r+s} x_i = 0
      \bigg\}.
  \]
\end{lemm}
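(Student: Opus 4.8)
The plan is to recognize the statement as the classical \emph{Dirichlet unit theorem} in the form recorded by Neukirch~\cite[Satz~I.7.3]{Neukirch07_AZT}, so that the most economical justification is simply to cite that result. Should a self-contained argument be desired, I would run the standard geometry-of-numbers proof, whose structure I outline below.

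First I would verify that $l(\Units(O_K))$ lands in the hyperplane $H$. For a unit $u\in\Units(O_K)$ the field norm $N_{K/\rats}(u)$ is a rational integer which is also invertible in $\ints$, so $\abs{N_{K/\rats}(u)}=1$. Since the norm equals the product of the images of $u$ under all archimedean embeddings (each non-real place contributing a complex-conjugate pair), taking logarithms of $\abs{N_{K/\rats}(u)}=1$ forces the appropriate weighted sum of the numbers $\log\abs{u^{\sigma}}$ to vanish; under the normalization of absolute values used in the cited reference this is exactly the relation defining $H$. In the totally real case $s=0$, which is the only case we actually use (there $K=\rats(\chi)\cap\reals$), all weights equal $1$ and the relation is literally $\sum_i x_i=0$. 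Hence $l$ restricts to a group homomorphism $\Units(O_K)\to H$.

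Next I would prove discreteness. The preimage under $l$ of a bounded subset of $\reals^{r+s}$ consists of units all of whose archimedean absolute values are bounded; via the Minkowski embedding $K\hookrightarrow\reals^{r}\times\compl^{s}$ these correspond to elements of the lattice $O_K$ lying in a bounded region, of which there are only finitely many. Thus $l(\Units(O_K))$ is a discrete subgroup of $H$, that is, a lattice, and its kernel in $\Units(O_K)$ is the finite group of roots of unity of $K$.

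The one genuinely hard point is that this lattice is \emph{full}, i.e.\ has rank $\dim H=r+s-1$. Here I would invoke Minkowski's convex-body theorem: for a prescribed direction in $H$ one constructs nonzero algebraic integers of bounded norm whose archimedean absolute values are skewed in that direction (such integers exist in boxes of fixed covolume by Minkowski), and since only finitely many ideals have bounded norm, a pigeonhole argument produces two associates whose quotient is a unit with logarithmic image pointing approximately along the chosen direction; ranging over a basis of directions shows the image spans $H$. This compactness input for full rank is the crux of the whole theorem; being entirely classical, I would in the end defer to \cite[Satz~I.7.3]{Neukirch07_AZT} rather than reproduce it.
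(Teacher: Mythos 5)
Your proposal is correct and coincides with the paper's treatment: the paper offers no proof of this lemma at all, simply citing it as Neukirch's Satz~I.7.3 (Dirichlet's unit theorem), exactly as you do. Your supplementary sketch of the geometry-of-numbers argument is sound, and you are right to flag the normalization at the complex places --- the unweighted relation $\sum_i x_i = 0$ is literally correct precisely because here either $r=0$ or $s=0$ (note only that the paper also uses the totally imaginary case $K=\rats(\chi)\not\subseteq\reals$, not just the totally real one).
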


In the proof of \cref{l:full_latt},
we will apply this result to 
$K = \rats(\chi)$.
Set $F= K\cap \reals$, 
$\Gamma_0 = \Gal(F/\rats)$
and $\Gamma = \Gal(K/\rats)$.
If $F=K\subseteq \reals$, then
$r= \card{K:\rats}$ and $s=0$.
In this case, 
$\{\alpha_1, \dotsc, \alpha_r\} = \Gamma = \Gamma_0$.
If $K \not\subseteq\reals$, 
then $\card{K:F}=2$,
$r=0$, and $s= \card{F:\rats}$.
In this case, we may identify the set
$\{\beta_1,\dotsc, \beta_s\}$ with
the Galois group $\Gamma_0$: 
for each $\alpha \in \Gamma_0$, there are two
extensions of $\alpha$ to the field $K$,
and these are complex conjugates of each other.
Thus we get a set $\{\beta_1,\dotsc, \beta_s\}$
as in \cref{l:diri_lattice} by choosing 
exactly one extension for each $\alpha\in \Gamma$.
The map $l$ is independent of this choice anyway.

It follows that in both cases, we may rewrite the map $l$
(somewhat imprecisely) as
\[ l(z) = \big( \log \abs{z^{\alpha}} \big)_{\alpha\in \Gamma_0}.
\]

\begin{proof}[Proof of \cref{l:full_latt}]
First notice that the entries of $L(c)$ 
can be written as
\begin{align*}
   \log\big(\overline{\lambda(c)} \lambda(c) 
       \big)^{\alpha}
      = \log \big( \overline{\lambda(c)^{\alpha}} 
                   \lambda(c)^{\alpha}
             \big)
      = \log \abs{ \lambda(c)^{\alpha}}^2
       &= 2 \log \abs{\lambda(c)^{\alpha}},
\end{align*} 
where we tacitly replaced $\alpha$ by an extension to $\rats(\chi)$
when $\rats(\chi)\not\subseteq \reals$. 
Thus  $L(c) = 2 l (\lambda(c))$
for all $c\in C$,
with $l$ as in \cref{l:diri_lattice}.

In view of \cref{l:diri_lattice},
it remains to show that 
the group $\lambda(C)$ has finite index in
$\Units(O_K)$.
We know that $C$ is the group of units in
$\C_{\mat_d(\ints)}(G) \iso \enmo_{\ints G}(\ints^d)$, 
which is an order in
$A \iso \rats \times K$.
Another order in $\rats \times K$ 
(in fact, the unique maximal order) is
$\ints \times O_K$ with unit group
$\{\pm 1\} \times \Units(O_K)$.
By \cref{l:suborder},
it follows that $C$ has finite index in 
$\{\pm 1\} \times \Units(O_K)$.
Thus $\lambda(C)$ has finite index in 
$\Units(O_K)$ and the result follows.
\end{proof}

  For each $v\in \reals^d$, let 
  $v_{\alpha}$ be the orthogonal projection
  of $v$ onto the simple subspace $W^{\alpha}$.

\begin{lemm}\label{l:almostequal}
  There is a constant $D$, depending only on the group $G$,
  such that for every $v\in \reals^d $ with 
  $v_{\alpha}\neq 0$ for all $\alpha\in \Gamma_0$,
  there is a
  $c\in C$ with
  \[ \frac{\norm{(cv)_{\alpha}}^2}{\norm{(cv)_{\beta}}^2}
     \leq D
  \]
  for all $\alpha$, $\beta \in \Gamma_0$.
\end{lemm}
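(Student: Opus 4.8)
The plan is to \textbf{linearize the problem by taking logarithms}, turning the multiplicative action of $C$ on the norms of the projections into the additive lattice structure provided by \cref{l:full_latt}. Since each $c\in C$ centralizes $G$, it preserves the orthogonal decomposition $\reals^d = \Fix(G)\oplus\bigoplus_{\alpha\in\Gamma_0} W^{\alpha}$ of \cref{l:realdecomp}, and because this decomposition is orthogonal, $c$ commutes with the orthogonal projections onto its summands, so $(cv)_{\alpha} = c(v_{\alpha})$. The norm formula in \cref{l:realdecomp} then gives
\[
  \norm{(cv)_{\alpha}}^2
  = \big(\overline{\lambda(c)}\lambda(c)\big)^{\alpha}\,\norm{v_{\alpha}}^2
\]
for every $\alpha\in\Gamma_0$. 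Taking logarithms and recalling the definition of $L$ from \cref{l:full_latt}, this reads
\[
  \log\norm{(cv)_{\alpha}}^2 = L(c)_{\alpha} + y_{\alpha},
  \qquad y_{\alpha} := \log\norm{v_{\alpha}}^2,
\]
where the $y_{\alpha}$ are well defined precisely because $v_{\alpha}\neq 0$ for all $\alpha$.

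With this, the desired estimate $\norm{(cv)_{\alpha}}^2/\norm{(cv)_{\beta}}^2\leq D$ is equivalent to
\[
  (L(c)_{\alpha} + y_{\alpha}) - (L(c)_{\beta} + y_{\beta}) \leq \log D
  \quad\text{for all }\alpha,\beta\in\Gamma_0,
\]
that is, the vector $L(c)+y\in\reals^{\Gamma_0}$ should have all coordinate differences at most $\log D$. Since adding a multiple of the all-ones vector $\allone$ leaves every coordinate difference unchanged, I may replace $y$ by its orthogonal projection $y_H$ onto the hyperplane $H=\{x\mid\sum_{\alpha\in\Gamma_0} x_{\alpha}=0\}$; the goal becomes to make $L(c)+y_H$ small inside $H$.

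Now I would invoke \cref{l:full_latt}: the image $L(C)$ is a \emph{full} lattice in $H$, hence has a finite covering radius $\rho$, which depends only on this lattice and therefore only on $G$. Choosing $c\in C$ with $L(c)$ within Euclidean distance $\rho$ of the point $-y_H\in H$ yields $\norm{L(c)+y_H}\leq\rho$, so each coordinate of $L(c)+y_H$ is bounded in absolute value by $\rho$ and every coordinate difference is at most $2\rho$. Setting $D := e^{2\rho}$ then gives the claim, with $D$ independent of $v$.

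The substantive input is entirely \cref{l:full_latt}; once the problem is recast additively, the argument is the standard fact that a full lattice has finite covering radius, and the only thing to verify is that the resulting constant depends on $G$ alone. The point requiring care is the passage from $y$ to $y_H$ together with the bookkeeping that the ratio condition controls coordinate \emph{differences} rather than the coordinates themselves; everything else is routine.
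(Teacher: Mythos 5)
Your proof is correct and follows essentially the same route as the paper: take logarithms of the squared norms of the projections, subtract the mean to land in the hyperplane $H$, and use the fullness of the lattice $L(C)$ in $H$ (\cref{l:full_latt}) to translate into a bounded region, which bounds all coordinate differences. The only cosmetic difference is that you phrase the final step via the covering radius of $L(C)$, whereas the paper uses an arbitrary compact set $T$ with $H=T+L(C)$ (such as a fundamental parallelepiped); your closed ball of radius $\rho$ is just one choice of such a $T$.
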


As $\Fix(G)^{\perp}\cap \rats^d$ is a simple module,
the assumption $v_{\alpha}\neq 0$ for all $\alpha$
holds in particular for all 
$v \in \rats^d \setminus \Fix(G)$.

\begin{proof}[Proof of \cref{l:almostequal}]
  By \cref{l:full_latt}, there is a compact set~$T$,
  \[ T \subset H = \bigg\{ (x_{\alpha}) \in \reals^{\Gamma_0}
                       \mid
                       \sum_{\alpha} x_{\alpha} = 0
                   \bigg\},
  \]
  such that $H = T + L(C)$.
  (For example, we can choose $T$ as a fundamental 
   parallelepiped of the full lattice $L(C)$ in $H$.)
  
  For $v\in \reals^d$ as in the statement of the lemma, define
  \[ N(v) = \big( \log \norm{v_{\alpha}}^2 \big)_{\alpha}
            \in \reals^{\Gamma_0}.
  \]
  Let $S\in \reals^{\Gamma_0}$ be the vector having 
  all entries equal to
  \[ s := \frac{1}{\card{\Gamma_0}} \sum_{\alpha} \log\norm{v_{\alpha}}^2.
  \]
  This $s$ is chosen such that
  $N(v) - S \in H$.
  Thus there is $c\in C$ such that
  $L(c) + N(v) - S  \in T$, say
  $L(c) + N(v) - S = t = (t_{\alpha})$.
  
  As 
  \[ \norm{ (cv)_{\alpha}}^2
     = \norm{ c v_{\alpha}}^2
     = \left( \overline{\lambda(c)} \lambda(c)
       \right)^{\alpha} \norm{ v_{\alpha}}^2,
  \]
  it follows that
    \[  N(cv) = L(c) + N(v)
    \]   
  in general.
  Thus
  \begin{align*}
    \log\norm{cv_{\alpha}}^2 - \log\norm{cv_{\beta}}^2
      &=     (\log\norm{cv_{\alpha}}^2 -s) 
            - (\log\norm{cv_{\beta}}^2 - s)
      \\
      &= (N(cv) - S)_{\alpha} - (N(cv)-S)_{\beta}
      \\
      &= t_{\alpha} - t_{\beta}
      \\
      & \leq 
        \max_{\alpha, t} t_{\alpha} - \min_{\beta, t} t_{\beta}
        =: D_0.       
  \end{align*}
  This maximum and minimum exist
   since $T$ is compact.
  The number $D_0$ may depend on the choice of the set $T$, 
  but not on $v$ or $c$.
  Thus $\norm{cv_{\alpha}}^2/\norm{cv_{\beta}}^2$ is bounded
  by $ D := e^{D_0} $. 
\end{proof}

We see from the proof that we get a bound whenever we have
a subgroup $C_0$ of $\C_{\GL(d,\ints)}(G)$ such that
$L(C_0)$ is a full lattice in the hyperplane $H$.
Of course, we do not get the optimal bound then,
but in practice it may be difficult to compute the full
centralizer.

We will prove \cref{t:QI_finitelycorep}
by combining the last lemma with the following fundamental 
result~\cite[Theorem~9]{herrrehnschue15}
(which is actually true for arbitrary matrix
 groups~\cite[Theorem~3.13]{rehn13diss}).

\begin{thm}\label{t:projectionbounded}
  Let $G\leq \Sym{d}$ be a transitive permutation group.
  Then there is a constant $C$ (depending only on $d$)
  such that for each core point~$z$,
  there is a non-zero invariant subspace $U\leq \Fix(G)^{\perp}$
  over $\reals$
  such that $\norm{z|_{U}}^2 \leq C$.
\end{thm}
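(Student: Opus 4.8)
The plan is to exploit the defining property of a core point---that $P:=\orbpol(G,z)$ contains no lattice points besides its vertices $Gz$---via the \emph{flatness theorem} for hollow convex bodies. First I would dispose of the degenerate cases. Write $z_\perp := z - e_1z \in \Fix(G)^\perp$ and fix an orthogonal decomposition $\Fix(G)^\perp = \bigoplus_\alpha W_\alpha$ into irreducible $\reals G$-modules, with $z_\alpha$ the orthogonal projection of $z$ onto $W_\alpha$. If some $z_\alpha=0$ we are done with $U=W_\alpha$ and $C=0$, so I may assume every $z_\alpha\neq 0$; then $Gz$ affinely spans the layer $e_1z+\Fix(G)^\perp$ and $P$ is full-dimensional there. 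Since $z$ is a core point, the relative interior of $P$ contains no point of the coset lattice $z+\Lambda_0$, where $\Lambda_0 := \Fix(G)^\perp\cap\ints^d = \{x\in\ints^d : \sum_i x_i=0\}$. By the flatness theorem, a full-dimensional hollow body in the $(d-1)$-dimensional lattice $\Lambda_0$ has lattice width bounded by a constant $F$ depending only on $d-1$.

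Next I would extract and symmetrize the flat direction. The width bound produces a nonzero $\xi$ in the dual lattice $\Lambda_0^*$ (which lies inside $\Fix(G)^\perp$) with $\max_{x\in P}\langle\xi,x\rangle - \min_{x\in P}\langle\xi,x\rangle \leq F$; as the extrema are attained at vertices, $\max_g\langle\xi,gz\rangle - \min_g\langle\xi,gz\rangle \leq F$. Using that $G$ acts orthogonally and fixes $e_1z$, one rewrites $\langle\xi,gz\rangle - \langle\xi,e_1z\rangle = \langle g^{-1}\xi, z_\perp\rangle$, so the numbers $b_g := \langle g^{-1}\xi, z_\perp\rangle$ have range $\leq F$ and, since $\sum_g g^{-1}\xi = \card{G}\,e_1\xi = 0$, sum to zero. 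Hence $\abs{b_g}\leq F$ for every $g$, that is, $\abs{\langle h\xi, z_\perp\rangle}\leq F$ for all $h\in G$. This two-sidedness is what makes the estimate usable, and it is crucial that $\xi$ is a genuine lattice vector rather than an arbitrary direction.

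The final step is a second-moment computation turning this into a bound on one projection. Consider $M := \sum_{h\in G}(hz_\perp)(hz_\perp)^t$. It commutes with $G$, hence acts as a scalar $c_\alpha$ on each $W_\alpha$ (in the multiplicity-free case), and taking traces gives $c_\alpha = \card{G}\,\norm{z_\alpha}^2/\dim W_\alpha$. Summing $\abs{b_h}^2\leq F^2$ over $h$ yields $\langle\xi, M\xi\rangle = \sum_h\langle h\xi,z_\perp\rangle^2 \leq \card{G}F^2$, i.e.
\[ \sum_\alpha \frac{\norm{z_\alpha}^2\,\norm{\xi_\alpha}^2}{\dim W_\alpha} \leq F^2. \]
Because $\Lambda_0$ depends only on $d$, so does $\Lambda_0^*$, and any nonzero $\xi\in\Lambda_0^*$ satisfies $\norm{\xi}^2\geq\lambda_1^2$ for a constant $\lambda_1=\lambda_1(d)>0$. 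With at most $d$ components, some $\alpha^*$ has $\norm{\xi_{\alpha^*}}^2\geq\lambda_1^2/d$, whence $\norm{z_{\alpha^*}}^2\leq F^2 d\,\dim W_{\alpha^*}/\lambda_1^2 \leq F^2 d^2/\lambda_1^2 =: C$, a bound depending only on $d$. Taking $U=W_{\alpha^*}$ finishes the argument.

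The main obstacle is the passage from the purely combinatorial core-point condition to a geometric statement \emph{with a $z$-independent constant}. The hollow-body flatness theorem supplies the geometry, and I secure uniformity by never optimizing over a $z$-dependent direction: the width direction is a nonzero vector of the fixed lattice $\Lambda_0^*$, whose shortest-vector length is a dimensional constant, and the one-sided separation one might naively use is upgraded to the two-sided bound above. The remaining delicate point is multiplicities: if some $W_\alpha$ occurs with multiplicity $>1$, then $M$ need not be scalar on its isotypic component and the clean formula for $c_\alpha$ fails. I expect this to be handled inside each isotypic component, where either $z$ already vanishes on one irreducible copy (so $\norm{z|_U}=0$) or the relevant Gram matrix is nondegenerate and the same inequality is recovered; for the QI-groups to which this theorem is applied the constituents $W^\alpha$ are pairwise non-isomorphic Galois conjugates, so the multiplicity-free computation already suffices.
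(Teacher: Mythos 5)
First, a point of reference: the paper contains no proof of \cref{t:projectionbounded} at all---it is imported verbatim from \cite[Theorem~9]{herrrehnschue15} and \cite[Theorem~3.13]{rehn13diss}, so there is no in-paper argument to compare against. The proof in the cited sources is dual in spirit to yours: there one shows that if \emph{every} projection of $z$ to an invariant subspace is long, then the orbit polytope contains a ball (around a point near the barycenter) of radius exceeding the covering radius of the layer lattice, hence a non-vertex lattice point. Your route---core point $\Rightarrow$ $P$ hollow with respect to the affine lattice $z+\Lambda_0$ $\Rightarrow$ a flat direction $\xi\in\Lambda_0^{*}\setminus\{0\}$ by Khinchine's flatness theorem---is genuinely different, and its two key mechanisms are sound: the zero-sum trick ($\sum_g \langle g^{-1}\xi,z_\perp\rangle=0$, so range $\leq F$ upgrades to $\abs{\langle h\xi,z_\perp\rangle}\leq F$ for all $h$), and the second-moment identity $\langle\xi,M\xi\rangle=\sum_h\langle h\xi,z_\perp\rangle^2$. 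The uniformity bookkeeping is also exactly right: $\xi$ lies in the \emph{fixed} lattice $\Lambda_0^{*}$ (the projection of $\ints^d$ onto $\Fix(G)^\perp$), whose first minimum is a constant of $d$ alone, which is what makes the constant $z$-independent.

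There are, however, two steps that are false as literally written, both tied to multiplicity. (i) ``All $z_\alpha\neq 0$ implies $Gz$ affinely spans the layer'' fails when an irreducible occurs with multiplicity $>1$ (e.g.\ $G=\Sym{3}$ acting regularly on $\reals^6$): the module $\reals G z_\perp$ can be a proper \emph{diagonal} submodule of an isotypic component even though $z$ has nonzero projection to every summand of a fixed decomposition. The correct dichotomy is on $\reals G z_\perp$ itself: if $\reals G z_\perp\neq\Fix(G)^\perp$, take $U=(\reals G z_\perp)^{\perp}\cap\Fix(G)^\perp$, a nonzero invariant subspace with $z|_U=0$ (note the theorem asks only for an \emph{invariant} $U$, not an irreducible one); otherwise $P$ is genuinely full-dimensional in the layer and flatness applies. (ii) ``$M$ commutes with $G$, hence acts as a scalar on each $W_\alpha$'' is a Schur-type slip: commuting alone only places $M|_{W_\alpha}$ in $\enmo_{\reals G}(W_\alpha)\iso\reals$, $\compl$ or $\quats$; you must also use that $M$ is \emph{symmetric}, so its eigenspaces are $G$-invariant and irreducibility forces a single real eigenvalue. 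In fact this last observation dissolves the multiplicity problem you defer in your final paragraph, so no Gram-matrix case analysis is needed: diagonalize the symmetric positive semidefinite $M=\sum_h (hz_\perp)(hz_\perp)^t$ directly. Its eigenspaces are $G$-invariant and compatible with $\Fix(G)^\perp$; since $\xi$ has at most $d$ eigencomponents, some eigencomponent satisfies $\norm{\xi_{j}}^2\geq\lambda_1^2/d$, whence the corresponding eigenvalue obeys $\mu_{j}\leq \card{G}F^2d/\lambda_1^2$, and taking the trace of $M$ on $U:=E_{\mu_j}\cap\Fix(G)^\perp$ gives
\[ \norm{z|_U}^2=\frac{\mu_j\dim U}{\card{G}}\leq \frac{F^2d^2}{\lambda_1^2}. \]
With these two repairs your argument is a complete, self-contained proof for \emph{all} transitive groups; your closing remark that the QI-groups to which the paper applies the theorem are multiplicity-free (by \cref{l:decomp_ratirr}, the constituents are pairwise non-isomorphic Galois conjugates) is correct, but becomes unnecessary once the eigenspace version is used.
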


In our situation, the $W^{\alpha}$ from \cref{l:realdecomp}
are the only irreducible subspaces, and thus
for every core point $z$ there is some
$\alpha\in \Gamma_0$
with $\norm{z_{\alpha}}^2 \leq C$.

\begin{proof}[Proof of \cref{t:QI_finitelycorep}]
  Let $z$ be a core point with $z\notin \Fix (G) $.
  We want to show that there is 
    a $c\in \C_{\GL(d,\ints)}(G)$ and a vector 
    $b\in \Fix(G)\cap \ints^d$,
    such that 
    $ \norm{cz +b} \leq M$,
  where $M$ is a constant depending only on $G$ and not on~$z$.
  By \cref{l:almostequal}, there is
  $c \in \C_{\GL(d,\ints)}(G)$ such that 
  $\norm{cz_{\alpha}}^2 \leq D \norm{cz_{\beta}}^2$
  for all $\alpha$, $\beta\in \Gamma$,
  where $D$ is some constant depending only on $G$
  and not on $z$.
  
  Since $y = cz$ is also a core point
  (\cref{l:equiv_core_pts}),
  \cref{t:projectionbounded} yields that
  there is a $\beta\in \Gamma$ with 
  $ \norm{ y_{\beta} }^2 \leq C$
  (where, again, the constant~$C$ depends only on the group,
  not on $z$).
  It follows that 
  the squared norms of the other projections $y_{\alpha}$ are bounded by 
  $CD$.
  Thus
  \[ \norm{ y|_{\Fix(G)^{\perp}} }^2
     \leq C + (\card{\Gamma}-1)CD
  \]
  is bounded.

  Since the projection to the fixed space can be bounded
  by translating with some $b\in \Fix(G)\cap \ints^d  $,
  the theorem follows.
\end{proof}

\begin{example}\label{ex:c5}
  Let $p$ be a prime,
  and let $G = C_p \leq \Sym{p}$ be generated by a $p$-cycle
  acting on $\reals^p$ by (cyclically) permuting coordinates.
  Then $G$ is a QI-group.
  (Of course, every transitive group of prime degree
   is a QI-group.)
  For $p$ odd,
  $\reals^p$ decomposes into $\Fix(G)$ and 
  $(p-1)/2$ irreducible subspaces of dimension $2$.
  Here the lattice can be identified with the group ring
  $\ints G$, and thus
  $\C_{\GL(p,\ints)}(G) \iso \Units(\ints G)$.
  The torsion free part of this unit group is a free abelian
  group of rank $(p-3)/2$.

  Let us see what constant we can derive for 
  $p=5$.
  For concreteness, let $g=(1,2,3,4,5)$
  and $G= \erz{g}$.
  We have the decomposition
  \[ \reals^5 = \Fix(G) \oplus W \oplus W'.
  \]
  The projections from $\reals^5$ onto $W$ and $W'$
  are given by
  \begin{align*} 
    e_W &= \frac{1}{5}(2+ag+bg^2+bg^3+ag^4)
    ,&
    a &= \frac{-1+\sqrt{5}}{2},
     \\
     e_{W'} &=  \frac{1}{5}(2+bg+ag^2+ag^3+bg^4)
     ,&
     b &= \frac{-1-\sqrt{5}}{2}.
  \end{align*}
  The centralizer of $G$ has the form
  \[ \C_{\GL(5,\ints)}(G)
     = \{ \pm I\} \times G \times \erz{ u },
  \]
   where $u$ is a unit of infinite order.
   Here we can choose
    $u= -1+g+g^4$ with inverse $-1+g^2+g^3$.
   To $u$ corresponds the matrix
   \begin{equation} \label{eqn:CentralizerMatrix}
      \begin{pmatrix}
        -1 & 1 & 0 & 0 & 1 \\
        1 & -1 & 1 & 0 & 0 \\
        0 & 1 & -1 & 1 & 0 \\
        0 & 0 & 1 & -1 & 1 \\
        1 & 0 & 0 & 1 & -1
      \end{pmatrix} \in \GL(5,\ints)
   .
   \end{equation}
   This unit acts on $W$ as  $-1+a$ and on $W'$
   as $-1+b$.
   For the constant~$D$ of \cref{l:almostequal},
   we get $D=(b-1)^2 = 2-3b = (7+3\sqrt{5})/2$.
   For the constant $C$ in \cref{t:projectionbounded}, 
   we get a bound $C = 48/5$ (from the proof).
   We can conclude that every core point is equivalent to one
   with squared norm smaller
   than $M= (2/5) + (48/5)(1 + 2-3b) \approx 50.6$.
   
   We can get somewhat better bounds by applying 
   \cref{t:projectionbounded} ``layerwise''.
   The $k$-layer is, by definition, the set of all
   $z\in \ints^d$ with $\sum z_i = k$.
   In our example, every lattice point is equivalent to one in 
   layer $1$ or layer $2$.
 
   For example, it can be shown that 
   each core point in the $1$-layer 
   is equivalent to a point $z$
   with $\norm{z}^2 \leq 31$.
   However, this bound is still far from optimal.
   Using the computer algebra system \texttt{GAP}~\cite{GAP486},
   we found that the only core points of
   $C_5$ in the $1$-layer up to normalizer equivalence are just
   \begin{align*}
     & (1,0,0,0,0)^t, && (1,1,0,0,-1)^t, &&(1,1,1,0,-2)^t,
     \\
      & (2,1,0,-1,-1)^t, && (2,1,-2,0,0)^t.
   \end{align*}
   (The normalizer $\N_{\GL(5,\ints)}(G)$ is generated by
    the centralizer and the permutation matrix corresponding to
    the permutation $(2,3,5,4)$.)
   For completeness, we also give a list of core points 
   up to normalizer equivalence in the  $2$-layer:
   \begin{align*}
     & (1,1,0,0,0)^t, && (1,1,1,0,-1)^t, && (2,1,0,0,-1)^t,
     \\
     & (2,1,1,-1,-1)^t, && (2,1,1,-2,0)^t.
   \end{align*}
   Every nontrivial core point for $C_5$ is normalizer equivalent 
   to exactly  one of these ten core points.
   
   For this example, 
   an infinite series of core points of the form
       \[ (f_{j+1},0,f_j,f_j,0)^t,
       \]
   where $f_j$ is the $j$th Fibonacci number,
   was found by Rehn~\cite[5.2.2]{rehn13diss}.
   Each point in this series is normalizer equivalent 
   to one of the two obvious core points
    $(1,0,0,0,0)^t$ and $(1,0,1,1,0)^t$.
   This follows from
    \[ (1-g-g^4)(f_{j+1},0,f_j,f_j,0)^t
       =(f_{j+1},-f_{j+2},0,0,-f_{j+2})^t
    \]
    and thus
    \begin{equation*}
     (1-g-g^4)(f_{j+1},0,f_j,f_j,0)^t 
       + f_{j+2}(1,1,1,1,1)^t
        = (f_{j+3},0,f_{j+2},f_{j+2},0)^t.    
    \end{equation*}
\end{example}

\begin{example}
  Now set
  \[ G = \erz{ (1,2,3,4,5), \; (1,4)(2,3) }
      \iso D_5,
  \]
  the dihedral group of order $10$.
  Then
  \[ C_{\GL(5,\ints)}(G) = \{ \pm I\} \erz{u},
  \]  
  where $u$ is as in the previous example. 
  The normalizer of $G$ is the same as that of the cyclic group
  $C_5=\erz{(1,2,3,4,5)}$.
  In particular, normalizer equivalence for $D_5$ and $C_5$
  is the same equivalence relation.
  Of the core points from the last example,
  only $(1,0,0,0,0)^t$ and $(1,1,0,0,0)^t$
  are also core points for $D_5$.
  (In fact, for most of the other points, we have
   some lattice point on an interval between 
   two vertices---for example
   $(1,0,0,0,0)^t = (1/2) 
   \big((1,1,0,0,-1)^t + (2,5)(3,4)(1,1,0,0,-1)^t \big)$.
  Thus there are only two core points up to normalizer equivalence
  in this example.  
\end{example}

\begin{remark}
  The number of core points up to normalizer equivalence
  seems to grow quickly for cyclic groups of prime order.
  For $p=7$, we get $515$ core points up to normalizer
  equivalence.
\end{remark} 

Herr, Rehn, and Schürmann~\cite{herrrehnschue15}
conjectured that a finite transitive
permutation group $G$ has infinitely many core points
up to translation equivalence if the group is not 
$2$-homogeneous. 
This conjecture is still open
but is known to be true in a number of special cases,
including imprimitive permutation groups and 
all groups of degree $d\leq 127$.

It is known that a permutation group 
$G \leq \Sym{d}$ is $2$-homogeneous
if and only if $\Fix_{\reals^d}(G)^{\perp}$
is irreducible~\cite[Lemma~2(iii)]{cameron72}.
In this case, there are only finitely many core points up to
translation equivalence~\cite[Corollary~10]{herrrehnschue15}.

We propose the following conjecture,
which is the converse of \cref{t:QI_finitelycorep}:

\begin{conjecture}
  Let $G\leq \Sym{d}$ be a transitive permutation 
  group such that 
  $\Fix(G)^{\perp}$ contains a rational
  $G$-invariant subspace other than
  $\{0\}$ and $\Fix(G)^{\perp}$ itself.
  Then there are infinitely many core points up to
  normalizer equivalence.
\end{conjecture}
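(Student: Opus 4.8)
Since the statement is the exact converse of \cref{t:QI_finitelycorep}, the plan is to run that argument in reverse: where QI-ness made every projection norm controllable by the centralizer and hence bounded, the failure of QI should produce an invariant of centralizer equivalence that is unbounded on core points. First I would reduce to centralizer equivalence. By \cref{l:normcentfin} the index $\card{\N_{\GL(d,\ints)}(G):\C_{\GL(d,\ints)}(G)}$ is finite, and $\C_{\GL(d,\ints)}(G)$ is normal in $\N_{\GL(d,\ints)}(G)$, so each normalizer equivalence class is a union of at most $\card{\N_{\GL(d,\ints)}(G):\C_{\GL(d,\ints)}(G)}$ centralizer equivalence classes. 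Hence it suffices to exhibit infinitely many core points up to centralizer equivalence.

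Next I would isolate the invariant. Decompose $\reals^d = \Fix(G)\oplus\bigoplus_\alpha W^\alpha$ into real irreducibles as in \cref{l:realdecomp} and group them into the rational isotypic components $P_1,\dotsc,P_k$ of $\Fix(G)^\perp\cap\rats^d$. For a component $P_i$ of multiplicity one and a vector $z$ with $z_\alpha\neq 0$ for each piece $W^\alpha\subseteq P_i$, set $Q_i(z)=\sum_\alpha\log\norm{z_\alpha}^2$. I claim $Q_i$ is invariant under centralizer equivalence. Translation only moves the $\Fix(G)$-component, and any $c\in\C_{\GL(d,\ints)}(G)=\Units(\enmo_{\ints G}(\ints^d))$ restricts to a lattice automorphism of $\ints^d\cap P_i$, so $\card{\det(c|_{P_i})}=1$; since $c$ scales each $W^\alpha$ by a positive factor (\cref{l:realdecomp}), the product of these factors over $P_i$ equals $\det(c|_{P_i})^2=1$, whence $Q_i(cz)=Q_i(z)$. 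This per-component reduced-norm constraint is exactly what makes the log-norm lattice $L(C)$ of \cref{l:full_latt} fail to be full across distinct components once QI-ness is dropped, so that the inter-component differences $Q_i-Q_j$ escape the control exploited in \cref{t:QI_finitelycorep}.

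Then I would construct the family. As $G$ is not a QI-group, $\Fix(G)^\perp\cap\rats^d$ is not simple, so there is a proper nonzero rational $G$-invariant subspace $P\leq\Fix(G)^\perp$; pick $0\neq v\in P\cap\ints^d$. Taking a base core point $z_0$ with all projections nonzero, I would argue that $z_0+mv$ is again a core point for infinitely many $m\in\ints$ --- the direct analogue of the families $(1+m,-m,m,-m)^t$ for $C_4$ and $b_1+mv$ for $C_8$ in \cref{ex:cycgen} --- using the minimal-$\sum_g\norm{gz}^2$ characterization of core points recalled in \cref{sec:defs} to produce a suitable $z_0$. When a second (multiplicity-one) isotypic component $P_j$ exists, $v\in P$ leaves the $P_j$-projection fixed, so $Q_j(z_0+mv)$ is constant while $\norm{(z_0+mv)|_{P}}\to\infty$ forces some $Q_i\to\infty$; thus $Q_i-Q_j$ takes infinitely many values and the points fall into infinitely many centralizer classes. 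In the remaining case, where $\Fix(G)^\perp$ is a single isotypic component of multiplicity $\geq 2$ (so $\C_{\GL(d,\ints)}(G)$ acts through a group $\GL(m,R_1)$ with $m\geq 2$, as in \cref{c:multfree}), I would replace $Q_i-Q_j$ by the covolume of the $R_1$-lattice spanned by the copies of the projection, which the reduced norm again fixes, and realize infinitely many covolumes by a two-parameter family.

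The main obstacle is showing that infinitely many candidates are \emph{genuinely} core points. Being a core point is a global condition on the lattice points of the orbit polytope, and forcing it to coexist with a deliberately unbalanced projection profile is delicate; in the multiplicity case one moreover loses the clean field-theoretic description of the centralizer furnished by \cref{l:decomp_ratirr}, which the authors already note does not extend beyond QI-groups. This is precisely why the statement remains conjectural: the invariants above pin down \emph{what} must become unbounded, but constructing core points that drive them to infinity in full generality is the genuinely hard step.
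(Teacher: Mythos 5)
This statement is an open conjecture in the paper: the authors give no proof of it, and they explicitly relate it to the (also open) Herr--Rehn--Schürmann conjecture, of which it is a generalization. So there is nothing in the paper to compare your argument against, and your proposal must stand on its own as a proof --- which it does not. The reduction to centralizer equivalence via \cref{l:normcentfin} is sound, and the invariant $Q_i$ you extract (constancy of $\sum_\alpha \log\norm{z_\alpha}^2$ over a multiplicity-one rational isotypic component, via $\abs{\det(c|_{P_i})}=1$) is a genuinely useful observation that correctly isolates what the QI hypothesis buys in \cref{l:full_latt}: there the single component makes $L(C)$ a full lattice in the trace-zero hyperplane, whereas with several components the differences $Q_i-Q_j$ are frozen. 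But the entire burden of the conjecture sits in the step you state as ``I would argue that $z_0+mv$ is again a core point for infinitely many $m$.'' That is not an argument; it is a restatement of the conjecture. The families in \cref{ex:cycgen} are not instances of a general mechanism but ad hoc constructions (e.g.\ $(1+m,-m,m,-m)^t$ for $C_4$ relies on the specific layer structure and on \cite[Theorem~30]{herrrehnschue15}, which itself only covers special imprimitive situations), and the minimal-$\sum_g\norm{gz}^2$ characterization recalled in \cref{sec:defs} produces core points with a \emph{prescribed fixed-space component}, not core points with a prescribed unbalanced projection profile onto $\Fix(G)^{\perp}$. Nothing in the paper, nor in your proposal, shows that core points exist with $Q_i-Q_j$ unbounded.

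Two further technical points would need attention even if the existence step were supplied. First, your claim that every $c$ in the centralizer ``scales each $W^\alpha$ by a positive factor'' is proved in the paper (\cref{l:realdecomp}) only under the QI hypothesis, where the endomorphism algebra of the relevant component is a field acting through an orthonormal eigenbasis; for a general multiplicity-one component the endomorphism ring may be a noncommutative division algebra and the action on $W^\alpha$ need not be conformal, so $Q_i$ is not obviously well defined as a centralizer invariant. Second, in the higher-multiplicity case your proposed covolume invariant is again only invariant under the reduced-norm-one part of $\GL(m,R_1)$, and controlling the full unit group there is exactly the difficulty the authors flag when they say \cref{l:decomp_ratirr} does not extend beyond QI-groups. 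You diagnose the obstruction accurately in your final paragraph, but that diagnosis is an honest admission that the proof is incomplete, not a proof.
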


This can be seen as a generalization
of the Herr-Rehn-Schürmann conjecture,
since translation equivalence refines normalizer equivalence,
and since whenever
$\Fix(G)^{\perp}$ contains a nontrivial irrational $G$-invariant
subspace,  there are infinitely many core points up to translation
equivalence by \cref{t:finitecrit}
(or~\cite[Theorem~32]{herrrehnschue15}).

\section{Application to integer linear optimization} 
\label{sec:app}

In this last section we describe a possible application of 
the concept of normalizer equivalence
to symmetric integer linear optimization problems.
For many years it has been known that symmetry
often leads to difficult problem instances in integer optimization.
Standard approaches like branching usually work
particularly poorly when large symmetry groups are present,
since a lot of equivalent subproblems have to be dealt with in such cases.
Therefore, in recent years several new methods for exploiting symmetries 
in integer linear programming have been developed.
See, for example,
\cite{Margot03,Friedman07,BulutogluMargot08,%
KaibelPfetsch08,LinderothMT09,OstrowskiLRS11,
GhoniemSherali11,FischettiLiberti12,HojnyPfetsch17}
and the surveys by Margot \cite{Margot2009} and 
Pfetsch and Rehn \cite{PfetschRehn2017}
for an overview. 
These methods (with the exception of \cite{FischettiLiberti12}) fall broadly into two classes:
Either they modify the standard branching approach,
using isomorphism tests or isomorphism free generation to
avoid solving equivalent subproblems, or they use techniques to
cut down the original symmetric problem to a less symmetric one,
which contains at least one element of each orbit of solutions.
By now, many of the
leading commercial solvers, like
\texttt{CPLEX} \cite{cplex}, \texttt{Gurobi} \cite{gurobi}, 
and \texttt{XPRESS} \cite{xpress},
have included some techniques
to detect and exploit special types of symmetries.
Accompanying their computational survey \cite{PfetschRehn2017}, 
Pfetsch and Rehn also published implementations of some
symmetry exploiting algorithms for \texttt{SCIP} \cite{scip},
like isomorphism pruning and orbital branching.

Core points were introduced as an additional 
tool to deal with symmetries in integer convex optimization problems.
Knowing the core points for a given symmetry group
allows one to restrict the search for optima 
to this subset of the integer vectors \cite{herrrehnschue13}.
There are many possible ways how core points could be used.
For instance,  one could use the fact that 
core points are close to invariant subspaces, 
by adding additional quadratic constraints 
(second order cone constraints). 
In the case of QI-groups, 
hence with finitely many core points 
up to normalizer equivalence 
(\cref{t:QI_finitelycorep}),
one could try to systematically run through   
core points satisfying the problem constraints.

In contrast to the aforementioned approaches,
we here propose natural reformulations of symmetric
integer optimization problems using the normalizer of the symmetry
group.
Recall that a general standard form of an integer linear optimization
problem is
\begin{equation} \label{eqn:standardILP}
    \max c^t x 
    \quad \text{such that} \quad 
    A x \leq b,\; 
    x\in \ints^d,
\end{equation}
for some given matrix $A$ and vectors $b$ and $c$, 
all of them usually rational.
If $c=0$, then
we have a so-called 
\emph{feasibility problem}, 
asking simply whether or not
there is an integral solution to a 
given system of linear inequalities.
Geometrically, we are asking
whether some polyhedral set (a polytope, if bounded) 
contains an integral point.

A group $G\leq \GL(d,\ints)$ is called a 
\emph{group of symmetries} 
of problem~\eqref{eqn:standardILP}
if the constraints $Ax\leq b$ 
and the linear objective function~$c^tx$ are
invariant under the action of $G$ on $\reals^d$,
that is, if  $c^t (gx) = c^t x$ and $A (gx) \leq b$ 
for all $g\in G$ whenever $A x \leq b$.
The first condition is, for instance, satisfied 
if $c$ is in the fixed space $\Fix(G)$.
Practically, computing a group of symmetries for a given problem is
usually reduced to the problem of finding symmetries 
of a suitable colored graph~\cite{BremnerETAL2014,PfetschRehn2017}.
Quite often in optimization, attention is restricted to groups 
$G \leq \Sym{d}$ acting on $\reals^d$ by permuting coordinates.

Generally, a linear reformulation of a problem as 
in~\eqref{eqn:standardILP} can be obtained 
by an integral linear substitution $x\mapsto Sx$ 
for some matrix~$S \in \GL(d,\ints)$:
\begin{equation} \label{eqn:reformulatedILP}
    \max (c^t S) x 
    \quad \text{such that} \quad 
    (AS) x \leq b,\: x\in \ints^d.
\end{equation}
(More generally, one can use integral 
affine substitutions $x\mapsto Sx + t$
with $S\in \GL(d,\ints)$ and $t\in \ints^d$.
For simplicity, we assume $t=0$ in the discussion to follow.)
We remark that reformulations as in 
\eqref{eqn:reformulatedILP} with a
matrix $S \in \GL(d,\ints)$ can of course be applied to any linear
integer optimization problem. 
In fact, this is a key idea of 
Lenstra's famous polynomial time algorithm 
in fixed dimension $d$~\cite{Lenstra1983}.
In Lenstra's algorithm, the transformation matrix $S$ is chosen 
to correspond to a suitable LLL-reduction of the lattice,
such that the transformed polyhedral set 
$\{ x \in \reals^d \mid (AS) x \leq b\}$ is sufficiently round.
This idea has successfully been used for different problem classes
of integer linear optimization problems 
(for an overview see~\cite{AardalEisenbrand2005}).
The main difficulty is the choice of an appropriate unimodular matrix
$S$ which simplifies the optimization problem.

If the symmetry group of an optimization problem 
contains the group $G$, 
then it is natural to choose matrices $S$ 
which keep the problem $G$-invariant.
When $S$ is an element of the normalizer 
$\N_{\GL(d,\ints)}(G)$,
problem~\eqref{eqn:standardILP} is 
$G$-invariant if and only if \eqref{eqn:reformulatedILP} is
$G$-invariant.
Note also that then $(c^t S)^t$ is in $\Fix (G)$.

We illustrate the idea
with a small concrete problem instance 
of~\eqref{eqn:standardILP} 
which is invariant under the cyclic group $C_5$.
In particular, using core points,
we construct 
$C_5$-invariant integral optimization problems 
that are quite hard or even impossible to solve for 
state-of-the-art commercial solvers 
like \texttt{CPLEX} or \texttt{GUROBI}. 
For instance, this is often the case when
the constraints $Ax\leq b$ can be satisfied by real vectors $x$, 
but not by integral ones. 

\begin{example}
The orbit polytope $P(C_5,z)$ of some integral point~$z$
has a description with linear inequalities of the form
$x_1 + \dotsb + x_5 = k$ and $Ax \leq b$,
where $A$ is a circulant $5\times 5$-matrix
\[
A=\begin{pmatrix}
a_1 & a_2 & a_3 & a_4 & a_5 \\
a_2 & a_3 & a_4 & a_5 & a_1 \\
a_3 & a_4 & a_5 & a_1 & a_2 \\
a_4 & a_5 & a_1 & a_2 & a_3 \\
a_5 & a_1 & a_2 & a_3 & a_4
\end{pmatrix}
\]
with integral entries $a_1$, $\dotsc $, $a_5$, 
and $b \in \ints^5$ satisfies $b_1=\dotsb = b_5$. 
If $z$ is a core point and if we replace
$b_i$ by $b_i':=b_i-1$, then we get a system of inequalities
having no integral solution.

Applying this construction to the core point
\[ z=U^{10}\cdot (1,1,1,0,-2)^t,
\]
where $U$ is the 
matrix from~\eqref{eqn:CentralizerMatrix} in \cref{ex:c5},
we get parameters
\begin{alignat*}{4}
    a_1 &= 515161, & \quad 
    a_2 &= 18376, & \quad 
    a_3 &= -503804, \\
    a_4 &= -329744, & \quad 
    a_5 &= 300011, & \quad
    b_1' &= 60.
\end{alignat*}
We can vary the values of $k\equiv 1 \mod 5$
(geometrically, this corresponds to translating the polytope
by some integral multiple of the all-ones vector).
This gives a series of problem instances
on which the commercial solvers very often not finish within 
a time limit of 10000~seconds on a usual desktop computer. 
For $k=1$, which seems computationally the easiest case,
a solution still always takes more than 4000~seconds.

However, knowing that a given problem such as the above is 
$C_5$-invariant, we can try
to find  an easier reformulation \eqref{eqn:reformulatedILP} 
by using matrices from the centralizer.
As a rule of thumb, we assume that a transformed problem with smaller
coefficients is ``easier.'' 
Here, the torsion free part of the centralizer is generated
by the matrix $U$ from~\eqref{eqn:CentralizerMatrix} in
\cref{ex:c5}, and so the only possible choices
for $S$ are $U$ and $U^{-1}$.
(A matrix of finite order will probably not simplify
a problem significantly.)
Here, applying $S=U$ yields an easier problem,
and one quickly finds that after applying $S$ ten times,
the problem is not simplified further by 
applying $U$ (or $U^{-1}$).
In other words, 
we transform the original problem instance with $U^{10}$. 
This yields an
equivalent $C_5$-invariant feasibility problem,
which is basically instantly solved by the commercial solvers
(finding that there is no integral solution).

As far as we know, this approach is in particular by far better 
than any previously known one that uses the 
symmetries of a cyclic group. 
One standard approach is, for example, to add symmetry-breaking
inequalities 
$x_1\leq x_2, \ldots, x_1\leq x_5$. 
This yields an improved performance in some cases
but is far from the order of computational gain
that is possible with our proposed reformulations.
\end{example}

In general, when an integer linear program~\eqref{eqn:standardILP} 
is invariant
under a QI-group $G$, and when it has any solutions at all,
then \cref{t:QI_finitelycorep} tells us that there is a 
transformation $x\mapsto Sx+ t$ with $S\in \N_{\GL(d,\ints)}(G)$
such that the reformulated problem has a feasible solution
in a given finite set 
(a set of representatives of core points under normalizer equivalence).
Heuristically, this means that we should be able to transform
any $G$-invariant problem into one of bounded difficulty:
By \cref{l:almostequal}, for any vector $x\in \reals^d$, 
there is an element $S\in \C_{\GL(d,\ints)}(G)$ 
such that the projections of $S x$ to the different 
$G$-invariant subspaces have approximately the same norm.
This means that the orbit polytope of $S x$ is ``round.''
 
Our approach is particularly straightforward 
when the torsion free part of 
the centralizer $\C_{\GL(d,\ints)}(G)$ has just rank~$1$,
as in the example with $G=C_5$ above.
When the centralizer contains a free abelian group of
some larger rank, then it is less clear 
how to reduce the problem efficiently. 
A possible heuristic is as follows:
Recall that in \cref{l:full_latt}, we described a map $L$
which maps the centralizer,
and thus its torsion-free part of rank~$r$ (say),
onto a certain lattice  in $\reals^{r+1}$.
This maps the problem of finding a
reformulation~\eqref{eqn:reformulatedILP}
with ``small'' $AS$ to a minimization problem
on a certain lattice. 
For example, when we minimize 
$\norm{AS}$, this translates to minimizing a convex function
on a lattice. 
So we can find a good reformulation by finding
a lattice point in $\reals^{r+1}$ which is close to the minimum,
using, for instance, LLL-reduction.
This will be further studied in a forthcoming paper.

\section*{Acknowledgments}
We would like to thank the anonymous referees for several valuable
comments.
We also gratefully acknowledge support by DFG grant SCHU 1503/6-1.

%-------------------------------------------------------------------
% Bibliography
%-------------------------------------------------------------------

% when using biber + biblatex:
\printbibliography

% when not using biblatex:
%\bibliographystyle{siamplain}
%\bibliography{core_points}

\end{document}